\DeclareMathAlphabet{\mathpzc}{OT1}{pzc}{m}{it}
\newtheorem{thm}{Theorem}
\newtheorem{preremark}[thm]{Remark}
\newenvironment{remark}{\begin{preremark}\rm}{\medskip \end{preremark}}
\numberwithin{equation}{section}
\newtheorem{prop}[thm]{Proposition}
\newtheorem{lemma}[thm]{Lemma}
\newtheorem{defn}[thm]{Definition}
\numberwithin{equation}{section}
\newcommand{\R}{\mathbb R}
\newcommand{\eps}{\varepsilon}
\newcommand{\dx} {\; \mathrm{d} x}
\newcommand{\dy} {\; \mathrm{d} y}
\DeclareMathOperator{\supp}{supp}
\DeclareMathOperator{\cov}{cov}
\newcommand{\meanbar}[1]{%
\setbox0 = \hbox{$#1 \int$}
\hbox to 0pt{%
\thinspace
\hskip 0.1\wd0
\raise 0.5\ht0
\hbox{%
\lower 0.5\dp0
\hbox{\rule{0.8\wd0}{2\linethickness}}
}%
\hss
}%
}
\def\@tocline#1#2#3#4#5#6#7{\relax
  \ifnum #1>\c@tocdepth 
  \else
    \par \addpenalty\@secpenalty\addvspace{#2}%
    \begingroup \hyphenpenalty\@M
    \@ifempty{#4}{%
      \@tempdima\csname r@tocindent\number#1\endcsname\relax
    }{%
      \@tempdima#4\relax
    }%
    \parindent\z@ \leftskip#3\relax \advance\leftskip\@tempdima\relax
    \rightskip\@pnumwidth plus4em \parfillskip-\@pnumwidth
    #5\leavevmode\hskip-\@tempdima
      \ifcase #1
       \or\or \hskip 1em \or \hskip 2em \else \hskip 3em \fi%
      #6\nobreak\relax
    \hfill\hbox to\@pnumwidth{\@tocpagenum{#7}}\par
    \nobreak
    \endgroup
  \fi}
\author[Georgiana Chatzigeorgiou]{Georgiana Chatzigeorgiou$^{1}$}  
\title{Locality properties of standard homogenization commutator}
\begin{document}

\begin{abstract} 
In the present work we study how the \textit{standard homogenization commutator}, a random field that plays a central role in the theory of fluctuations, quantitatively decorrelates on large scales.

\end{abstract}

\let\thefootnote\relax\footnotetext{ $^{1}$Max Planck Institute for Mathematics in the Sciences, Inselstrasse 22, 04103 Leipzig, Germany

\tt $\ $georgiana.chatzigeorgiou@mis.mpg.de
}


\maketitle

{\small \textbf{Mathematics Subject Classification (2020).} 35B27, 35R60, 35J15} 

{\small \textbf{Keywords.} stochastic homogenization; fluctuations; homogenization commutator; covariance estimate; higher-order theory}

\section{Introduction}

This work amounts to homogenization theory for uniformly elliptic linear equations in divergence-form. That is, we consider
\begin{equs} \label{eq}
\nabla  \cdot \left(a \left( \frac{\cdot}{\eps} \right) \nabla u_{\eps}+ f \right)=0, \quad \text{ in } \ \R^{d}
\end{equs}
with $f \in C^{\infty}_{c}(\R^{d})^{d}$ and $a$ being random (not necessarily symmetric) coefficients  that satisfy
\begin{equs}
\xi \cdot a(x) \xi \geq \lambda |\xi|^{2}, \quad  \xi \cdot a^{-1}(x) \xi \geq  |\xi|^{2} \quad \text{ for any } \ x,\xi \in \R^{d}
\end{equs}
for some positive constant $\lambda$. In the following we denote by $\langle \cdot \rangle$ the expectation with respect to the underlying measure on $a$'s. 

Since the works of Papanicolaou and Varadhan \cite{PV81} and Kozlov \cite{K79} we know that in stationary and ergodic random environments, the equation (\ref{eq}) homogenizes as $\eps \to 0$ to an equation
\begin{equs} \label{eq-hom}
\nabla  \cdot \left(\bar{a}  \nabla \bar{u}+ f \right)=0, \quad \text{ in } \ \R^{d}
\end{equs}
where the coefficients $\bar{a}$ are constant and deterministic. More precisely, the effective coefficients $\bar{a}$ are given by $\bar{a} e_{i} := \big \langle a(\nabla \phi_{i}+e_{i}) \big \rangle$ where the corrector $\phi_{i}$ is the (up to a random additive constant) unique a.s. solution of the equation $\nabla \cdot a(\nabla \phi_{i}+e_{i})=0$ in $\R^{d}$,
with $\nabla \phi_{i}$ being stationary, centered and having finite second moments. Furthermore, the aforementioned qualitative theory states that $\nabla u_{\eps}$ weakly converges to $\nabla \bar{u}$ with the oscillations of $u_{\eps}$ being captured by those of the so-called two-scale expansion $\left(1+\eps \phi_{i} \left( \frac{\cdot}{\eps} \right) \partial_{i}\right)\bar{u}$ in a strong norm. 

The quantitative theory of stochastic homogenization  for (\ref{eq}) (namely, the study of the error for the approximation $\nabla u_{\eps} \approx \nabla ( (1+\eps \phi_{i} \left( \frac{\cdot}{\eps} \right) \partial_{i})\bar{u} )$), has been also well-developed during the last decade. For that a suitable quantification of the ergodicity assumption is needed. Most of the developments are based on either a spectral gap inequality or on a finite range of dependence assumption. Here we adopt the spectral gap inequality approach which means that we have a version of Poincar\'e's inequality in infinite dimensions. Roughly speaking, we assume that the variance of an observable defined on the space of coefficient fields described above, can be estimated by a suitable norm of its functional derivative with respect to $a$, which describes the sensitivity of an observable (for instance $\nabla u_{\eps}$ or $\nabla \phi_{i}$) under changes on $a$'s. This direction of research was initiated by Gloria and Otto in \cite{GO11} and \cite{GO12}, inspired by the strategy introduced by Nadaff and Spencer  in \cite{NS98} . On the other hand, finite range of dependence and mixing conditions have been introduced by Yurinski\u{\i} in \cite{Y86} and further studied by Armstrong and Smart in \cite{AS16} (we refer the reader to \cite{AKM} for a detailed description of the progress in this direction).

Next to the spatial oscillations of $\nabla u_{\eps}$ described above, stochastic homogenization  also studies the random fluctuations of observables of the form $\int g \cdot \nabla u_{\eps}$. One of the first results in this direction is given in \cite{GM16} where the authors show that $\eps^{-d/2} \int g \cdot \big ( \nabla u_{\eps} - \big \langle  \nabla u_{\eps} \big \rangle \big )$ converges in law to a Gaussian random variable. In the same work the authors showed that the four-tensor Q introduced in \cite{MO16} describes explicitly the leading-order of the variance of $\eps^{-d/2} \int g \cdot \nabla u_{\eps}$. Moreover, they observed that  the limiting variance of $\eps^{-d/2}\int g \cdot \nabla u_{\eps}$ is not captured by that of $\eps^{-d/2} \int g \cdot \nabla \left( \left(1+\eps \phi_{i} \left( \frac{\cdot}{\eps} \right) \partial_{i}\right)\bar{u} \right)$ as one would naturally expect. As discovered in \cite{DGO20} (for the random conductance model) and \cite{DO20} (in the continuum Gaussian setting) a reasonable quantity to look at, when it comes to fluctuations, is the \textit{homogenization commutator}
\begin{equs}
\Xi^{1}_{\eps}[\nabla u_{\eps}]:=\left( a\left( \frac{\cdot}{\eps} \right) - \bar{a} \right) \nabla u_{\eps}.
\end{equs}
This notion first introduced in \cite{AKM17} and it is highly related to H-convergence which is in fact equivalent to $\Xi^{1}_{\eps} \rightharpoonup 0$ as $\eps \to 0$. The motivation to consider $\Xi^{1}_{\eps}$ while studying fluctuations of $\int g \cdot \nabla u_{\eps}$ comes from the following observation: for the Lax-Milgram solution $\bar{v}$ to the dual equation $\nabla \cdot (\bar{a}^{*}\nabla \bar{v} + g)=0$ we have
\begin{equs}
\int g \cdot \nabla u_{\eps} = -\int \nabla \bar{v} \cdot \bar{a}\nabla u_{\eps} =\int \nabla \bar{v} \cdot \left( a\left( \frac{\cdot}{\eps} \right) - \bar{a} \right)\nabla u_{\eps} + \int \nabla \bar{v} \cdot f.
\end{equs}
That is, the quantity of interest can be written in terms of homogenization commutator plus a deterministic term (which does not contribute to fluctuations). Subsequently, in \cite{DGO20} and \cite{DO20} the authors turned their attention to the study of $\Xi^{1}_{\eps}$ to realize that its fluctuations are captured by those of its two-scale expansion. More precisely, for
$\tilde{F}_{\eps}:= \int g \cdot \left[ \left(a\left( \frac{\cdot}{\eps} \right) - \bar{a} \right) \nabla u_{\eps} - \left( a\left( \frac{\cdot}{\eps} \right) - \bar{a} \right) \left( e_{i} + \nabla \phi_{i} \left(\frac{\cdot}{\eps }\right) \right) \partial_{i} \bar{u} \right]$, it holds (for $d \geq 3$)
\begin{equs}
\eps^{-d/2} \big \langle \big ( \tilde{F}_{\eps} - \langle \tilde{F}_{\eps} \rangle \big )^{2}\big \rangle^{1/2} \lesssim \eps. 
\end{equs}
This reveals the special role that the so-called \textit{standard homogenization commutator}, \begin{equs}
\Xi^{o,1}_{\eps}:=\left( a\left( \frac{\cdot}{\eps} \right) - \bar{a} \right)  \left( Id + \nabla \phi \left(\frac{\cdot}{\eps }\right)\right),
\end{equs}
plays in the theory of fluctuations. In \cite{DGO20} and \cite{DO20} the limiting covariance structure of $\int g \cdot \Xi^{o,1}_{\eps}e_{i}$ is quantitatively characterized and a (quantitative) CLT-type result is obtained for that quantity.

In the present work our aim is to explain how $\Xi^{o,1}_{\eps}$ decorrelates when averaged over balls which are far enough. From now on we set, for convenience, $\eps=1$ and work with macroscopic observables. More precisely, we consider the following macroscopic test functions with supports that are quantitatively ''far'',
\begin{equs} \label{test-fcts}
g(x) =R^ {-d} \eta \left(\frac{x}{R} \right) \in C^\infty_c(B_R) \quad \text{ and } \quad g'(x) =g(x-Le) \in C^\infty_c(B_R(Le)),
\end{equs}
where $\eta \in C^\infty_c(B_1)$ with $|D^{k} \eta| \leq C_{k,d}$, $1\ll R \ll L<\infty$ and $e \in \R^d$ with $|e|=1$. Our aim is to examine how
\begin{equs} \label{correlation}
\Big \langle \int_{\R^d} g(x) \Xi_{ij}^{o,1}(x) \dx \int_{\R^d} g'(y) \Xi_{ml}^{o,1}(y) \dy \Big \rangle
\end{equs}
decays in terms of both $R$ (which amounts to the macroscopic scale) and $L$ (which amounts to the distance between the supports), for every $1 \leq i,j \leq d$. Here, instead of working with the variance first and then appeal to a polarization argument, which would give no information on $L$, we work with the covariance using estimate (\ref{cov_est}) which is an immediate consequence of Hellfer-Sj\"ostrand representation formula. Namely, this work is a refinement of the analysis in \cite{DGO20} and \cite{DO20} on the locality properties of $\Xi^{o,1}$.

As first observed in \cite{DGO20}, one of the main features of $\Xi^{o,1}$ is the approximately local behaviour of its functional derivative in terms of the coefficient field $a$. This property is seen here for the derivative of $F_{ij}^{o,1}:=\int_{\R^d} g(x) \Xi_{ij}^{o,1}(x) \dx$. Precisely, in Proposition \ref{repr_form_prop}, we derive 
\begin{equs} \label{repr_form_1}
  \frac{\partial F_{ij}^{o,1}}{\partial a} = (\nabla \phi _{i}^{1}+e_{i}) \otimes  \left(  (\nabla \phi_{j}^{*} +e_{j} )g+ \phi_{j}^{*} \nabla g) + \nabla h_{j} \right) 
 \end{equs}
 with $- \nabla \cdot a^{*} \nabla h_{j} = \nabla \cdot \left( ( a^{*}\phi_{j}^{*} - \sigma_{j}^{*}  ) \nabla g \right)$. We see that $\frac{\partial F_{ij}^{o,1}}{\partial a}$ is given by the sum of two local terms plus the error term which is described through the solution of an auxiliary equation with r.h.s given in terms of $\nabla g$ (thus it would be of order $o(R^{-1})$). Next we plug (\ref{repr_form_1}) into the covariance estimate (\ref{cov_est}) which allows to estimate (\ref{correlation}) in terms of the derivative of $F_{ij}^{o,1}$ keeping the advantage of integrating against both $g$ and $g'$ so we could obtain a decay of order $\mathrm{o}\left( R^{-1}L^{-d}\right)$ (with a logarithmic correction). Main tools in the estimation of the r.h.s. of the covariance estimate will be the stochastic moment bounds for the correctors and the large-scale regularity theory.

In addition to the above estimate, we also study how the order of the decay is improved when we employ higher-order homogenization theory. More precisely, we consider the higher-order standard homogenization commutator
\begin{equs} \label{def_stand_comm}
\Xi_{ij}^{o,n}:= e_j \cdot (a-\bar{a}^1) (\nabla \phi _i^1+e_i) - \sum_{k=2}^n (-1)^{k-1} \bar{a}^{*,k}_{ji_1\dots i_{k-2}}e_{i_{k-1}} \cdot \partial ^{k-1}_{i_1 \dots i_{k-1}} \nabla \phi _i^1
\end{equs}
where, $1 \leq i,j \leq d$ and $1 < n \leq \tilde{d}$ for $\tilde{d}$ being the smallest integer larger than $\frac{d}{2}$ and $\bar{a}^{*,k}_{ji_1\dots i_{k-2}}$ the higher-order effective coefficients (see subsection \ref{higher-order} for precise definitions).  Moreover, we assume that we are in a Gaussian framework characterized by a covariance function with integrable decay of order $\mathrm{o}\left( |x|^{-d - \alpha_{0}} \right)$, for $0<\alpha_{0} \leq  \frac{d}{2}$ (see subsection \ref{ensemble} for a precise description of the class of ensembles we consider). We derive that the correlation has order $\mathrm{o}\left( R^{-d/2}L^{-d/2 -\alpha_{0}}\right)$ (up to a logarithmic correction) as well. That is, we see that the standard homogenization commutator inherits the property of $a$'s being weakly correlated, keeping the order of decay as well. The following is our main result.

\vspace{.5em}

\begin{thm}[Main Theorem] \label{mainthm}
If $d> 2$ is odd then
\begin{equs}P_{ijml}^{o,\frac{d+1}{2}}:=\Big \langle \int_{\R^d} g(x) \Xi_{ij}^{o,\frac{d+1}{2}}(x) \dx \int_{\R^d} g'(y) \Xi_{ml}^{o,\frac{d+1}{2}}(y) \dy \Big \rangle \lesssim R^{-\frac{d}{2}} L^{-\frac{d}{2}-\alpha_{0}} \ln \left( \frac{L}{R}\right).
\end{equs}
If $d\geq 2$ is even then
\begin{equs}
P_{ijml}^{o,\frac{d}{2}}:=\Big \langle \int_{\R^d} g(x) \Xi_{ij}^{o,\frac{d}{2}}(x) \dx \int_{\R^d} g'(y) \Xi_{ml}^{o,\frac{d}{2}}(y) \dy \Big \rangle \lesssim R^{-\frac{d}{2}}(\ln R)^{\frac{1}{2}} L^{-\frac{d}{2}-\alpha_{0}} \ln \left( \frac{L}{R}\right).
\end{equs}
Here $g$ and $g'$ are as described in (\ref{test-fcts}), $L>>R>>1$ and $\alpha_{0}$ is so that (\ref{c-decay}) holds. 
\end{thm}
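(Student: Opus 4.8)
The plan is to combine the covariance estimate (\ref{cov_est}) with a representation formula for the functional derivative $\frac{\partial F^{o,n}_{ij}}{\partial a}$ that displays the approximately local structure already visible in (\ref{repr_form_1}). Set $n=\frac{d+1}{2}$ if $d$ is odd and $n=\frac d2$ if $d$ is even, and write $F^{o,n}_{ij}:=\int_{\R^d}g\,\Xi^{o,n}_{ij}$ and $G^{o,n}_{ml}:=\int_{\R^d}g'\,\Xi^{o,n}_{ml}$, so that $P^{o,n}_{ijml}=\langle F^{o,n}_{ij}G^{o,n}_{ml}\rangle-\langle F^{o,n}_{ij}\rangle\langle G^{o,n}_{ml}\rangle$. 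Estimate (\ref{cov_est}), the Helffer--Sj\"ostrand input, bounds $|P^{o,n}_{ijml}|$ by a double integral of the covariance kernel $c(x-y)$ tested against annealed norms of $\frac{\partial F^{o,n}_{ij}}{\partial a}$ and $\frac{\partial G^{o,n}_{ml}}{\partial a}$; because $\supp g\subset B_R$ and $\supp g'\subset B_R(Le)$ are quantitatively far apart, the separation is harvested through the decay of $c$, so the whole problem reduces to estimating that double integral.

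\emph{Step 1: the representation formula for general $n$.} Generalising Proposition \ref{repr_form_prop}, I would differentiate the definition (\ref{def_stand_comm}) of $\Xi^{o,n}_{ij}$ with respect to $a$ and repeatedly use the hierarchy of corrector and flux-corrector equations recalled in subsection \ref{higher-order} to transfer, term by term, the spatial derivatives in $\partial^{k-1}_{i_1\dots i_{k-1}}\nabla\phi^1_i$ onto the test function $g$. This should yield an identity of the schematic form
\[
\frac{\partial F^{o,n}_{ij}}{\partial a}=(\nabla\phi^1_i+e_i)\otimes\Big(\sum_{k=0}^{n-1}\Phi^{(k)}_j\,\nabla^k g+\nabla h^{(n)}_j\Big),
\]
where each $\Phi^{(k)}_j$ is built from correctors and flux correctors of order $\le k$, and $h^{(n)}_j$ solves an auxiliary equation $-\nabla\cdot a^*\nabla h^{(n)}_j=\nabla\cdot\big(\Psi_j\,\nabla^{\,n}g\big)$ whose right-hand side is now driven by the \emph{top}-order derivative $\nabla^n g$ (of amplitude $O(R^{-d-n})$, supported in $B_R$). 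This extra order of differentiation on $g$, relative to the $n=1$ case (\ref{repr_form_1}), is exactly the mechanism by which higher-order homogenization sharpens the decay.

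\emph{Step 2: insertion into (\ref{cov_est}) and the term-by-term estimate.} Plugging the formula of Step 1 (and its $g'$-analogue) into (\ref{cov_est}) and expanding, the integrand splits into local$\times$local, local$\times$non-local, and non-local$\times$non-local contributions. The local contributions are controlled using the stationary stochastic moment bounds for $\nabla\phi^1$ and for the correctors and flux correctors entering $\Phi^{(k)}_j$ --- these are stationary with bounded moments for $k$ below the borderline order and carry at most a $(\ln|x|)^{1/2}$ growth at the borderline, which in even dimension is precisely the origin of the extra $(\ln R)^{1/2}$ --- together with $|\nabla^k g|\lesssim R^{-d-k}\mathbf 1_{B_R}$, the analogue for $g'$, and $c(x-y)\lesssim L^{-d-\alpha_0}$ on $B_R\times B_R(Le)$. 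For the non-local contributions the key input is an annealed decay estimate for $\nabla h^{(n)}_j$ (and its $g'$-analogue) away from the source ball, obtained from the large-scale $C^{1,\alpha}$ regularity theory together with annealed Green-function bounds and the corrector moment bounds; convolving this decay against $c$ and pairing with the remaining localised factor produces the factor $R^{-d/2}L^{-d/2-\alpha_0}$, the borderline integrability in that convolution generating $\ln(L/R)$. Summing the finitely many (at most $n$) contributions gives the claimed bound.

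The hard part, I expect, is the treatment of the non-local error terms $\nabla h^{(n)}_j$: one needs a \emph{sharp} annealed decay rate for the gradient of the solution of an elliptic equation in the random medium whose right-hand side is a divergence supported in $B_R$, valid uniformly down to unit scale and up to distance $L$, and good enough to survive convolution against the merely polynomially decaying kernel $c$. It is here that the order $n$ must be pushed up to (essentially) $\tilde d$, that the large-scale regularity theory, the Green-function estimates and the corrector moment bounds have to be combined with care, and that the dimension-dependent logarithmic corrections --- the $\ln(L/R)$ in both cases and the extra $(\ln R)^{1/2}$ in even dimension, coming from the borderline-order corrector --- have to be tracked precisely.
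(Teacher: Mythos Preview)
Your proposal is correct and follows essentially the same approach as the paper: the same higher-order representation formula for $\frac{\partial F^{o,n}_{ij}}{\partial a}$ (Proposition~\ref{repr_form_prop}, with the sum actually running to $k=n$ rather than $n-1$), the same insertion into the covariance estimate (\ref{cov_est}) with the local/non-local splitting, and the same identification of the logarithmic corrections. The paper controls $\nabla h_j^{(n)}$ via the large-scale Lipschitz (mean-value) estimate plus small-scale regularity and annealed Calder\'on--Zygmund bounds rather than Green-function estimates, but this is a cosmetic difference in tools, not in strategy.
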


\vspace{.5em}

\section{Preliminaries}

\subsection{Assumptions on the ensemble} \label{ensemble} We first describe the framework we adopt and the main ingredients we need for our analysis which hold true in this framework.

Let $\langle \cdot \rangle$ be a stationary and centered Gaussian ensemble of scalar fields $G$ on $\R^{d}$ characterized by its covariance function $c(x)=\langle G(x)G(0) \rangle$ which is assumed to satisfy the following 
\begin{equs} \label{c-decay}
|c(x)| \leq \frac{C_{0}}{(1+|x|)^{d+\alpha_{0}}}
\end{equs}
for some constants $0<\alpha_{0} \leq \frac{d}{2}$ and $C_{0}>0$. Moreover, we assume that the (always non-negative) Fourier transform of $c$ satisfies 
\begin{equs} \label{c-fourier}
\mathcal{F}c(k) \leq \frac{C_{1}}{(1+ |k|)^{d+2\alpha_{1}}}
\end{equs}
for some $\alpha_{1} \in (0,1)$ and some constant $C_{1}>0$. Next we identify $\langle \cdot \rangle$ with its push-forward under the map: $G \mapsto a$, where $a(x):=A(G(x))$, $A:\R \to \R^{d \times d}_{\lambda}$ a Lipschitz function and $\R^{d \times d}_{\lambda}$ the space of $\lambda$-elliptic matrices.

Note that in the framework adopted here, one can ensure that a spectral gap inequality (see for instance Lemma 3.1 in \cite{JO20}). Furthermore, Hellfer-Sj\"ostrand representation formula 
 \begin{equs} \label{HS}
 \cov[ F,H]  =\Big \langle  \int  \int  \frac{\partial F }{\partial G(x)} c(x-y) (1+\mathcal{L})^{-1} \frac{\partial H}{\partial G(y)} \dy \dx \Big \rangle
 \end{equs}
 holds  for every suitable random variables $F$ and $H$ (we refer the reader to section 4 in \cite{DO20} for precise statements and the definition of the differential operator $\mathcal{L}$ - here we only use the fact that this operator is bounded to get (\ref{cov_est})). We denote by $\frac{\partial F }{\partial G(x)}= \frac{\partial F }{\partial a(x)} A'(G(x))$, where the random tensor $\frac{\partial F_{ij}^{o,n}}{\partial a(x)}$ stands for the functional derivative of $F$ defined through
 \begin{equs} \label{def-frechet}
 \lim_{t \to 0} \frac{F(a+t \delta a) -F(a)}{t} = \int \frac{\partial F}{\partial a(x)} :  \delta a(x) \dx.
 \end{equs}
One of the main ingredients we use in this paper is the following covariance estimate (which is an immediate consequence of (\ref{HS}) and of the fact $\Big  \lvert \frac{\partial F }{\partial G(x)} \Big \rvert \lesssim \Big  \lvert \frac{\partial F }{\partial a(x)}  \Big \rvert$ )
 \begin{equs} \label{cov_est}
 \cov[ F,H]  \lesssim \int \Big \langle \Big  \lvert \frac{\partial F }{\partial a(x)}  \Big \rvert ^{2}  \Big \rangle ^{1/2} \int |c(x-y)| \Big \langle \Big |\frac{\partial H}{\partial a(y)}\Big |^{2}\Big \rangle ^{1/2} \dy \dx.
 \end{equs}

Finally, let us also mention that for the class of ensembles we consider here (in particular, because of (\ref{c-fourier})) we can show that realizations $G$ (thus $a$'s) are H\"older continuous with H\"older norms having bounded stochastic moments, that is,
\begin{equs} \label{a-holder}
\big \langle ||a||_{C^{\alpha'}(B_{1})}^{q} \big \rangle ^{1/q} \lesssim_{q,\alpha'} 1
\end{equs}
for any $0<\alpha'<\alpha_{1}$ and $q \geq 1$ (see Appendix A in \cite{JO20} for a proof).

Note that by $\lesssim$ we mean $\leq$ times a constant which depends only on $d, \lambda, \alpha_{0}, \alpha_{1}, ||A||_{C^{1}}$ and on quantities related to $c$. Moreover, note that we use the Einstein's summation convention.
 
 \vspace{.5em}
 
\subsection{Higher-order theory} \label{higher-order}

For reader's convenience let us first introduce the notions of higher-order correctors and effective coefficients and their main properties that we use in the following (see Definition 2.1 and Proposition 2.2 in \cite{DO20}).

\begin{defn} \label{correctors}
Let $\tilde{d}$ be the smallest integer larger than $\frac{d}{2}$. The correctors $(\phi^{n})_{0 \leq n \leq \tilde{d}}$, the flux correctors $(\sigma^{n})_{0 \leq n \leq \tilde{d}}$ and the effective coefficients $(\bar{a}^{n})_{1 \leq n \leq \tilde{d}}$ are inductively defined as follows
\begin{enumerate}
\item[$\bullet$] $\phi^{0}:=1$ and $\phi^{n}:= (\phi^{n}_{i_{1} \dots i_{n}})_{1\leq i_{1}, \dots,i_{n} \leq d}$ for any $1 \leq n \leq \tilde{d}$, with $\phi^{n}_{i_{1} \dots i_{n}}$ a scalar field satysfying
\begin{equs} \label{corr-eq}
-\nabla \cdot a \nabla \phi^{n}_{i_{1} \dots i_{n}}= \nabla \cdot \left( \left(a \phi^{n-1}_{i_{1} \dots i_{n-1}} - \sigma^{n-1}_{i_{1} \dots i_{n-1}}\right) e_{i_{n}} \right)
\end{equs}
with $\nabla \phi^{n}_{i_{1} \dots i_{n}}$ being stationary, centered and having finite second moments.
\item[$\bullet$] $\bar{a}^{n}:= (\bar{a}^{n}_{i_{1} \dots i_{n-1}})_{1\leq i_{1}, \dots,i_{n-1} \leq d}$ for any $1 \leq n \leq \tilde{d}$, with $\bar{a}^{n}_{i_{1} \dots i_{n-1}}$ the matrix given by
\begin{equs} \label{effectivecoeff}
\bar{a}^{n}_{i_{1}  \dots i_{n-1}} e_{i_{n}}:= \Big \langle a \left( \nabla \phi^{n}_{i_{1} \dots i_{n}}+ \phi^{n-1}_{i_{1} \dots i_{n-1}}e_{i_{n}}\right)  \Big \rangle.
\end{equs}
\item[$\bullet$] $\sigma^{0}:=0$ and $\sigma^{n}:= (\sigma^{n}_{i_{1} \dots i_{n}})_{1\leq i_{1}, \dots,i_{n} \leq d}$ for any $1 \leq n \leq \tilde{d}$, with $\sigma^{n}_{i_{1} \dots i_{n}}$ a skew-symmetric matrix satysfying
\begin{equs} \label{fluxcorr}
\nabla \cdot \sigma^{n}_{i_{1} \dots i_{n}} := a \nabla \phi^{n}_{i_{1} \dots i_{n}} + \left(a \phi^{n-1}_{i_{1} \dots i_{n-1}} - \sigma^{n-1}_{i_{1} \dots i_{n-1}}\right) e_{i_{n}} -\bar{a}^{n}_{i_{1}  \dots i_{n-1}} e_{i_{n}}
\end{equs}
with $\nabla \sigma^{n}_{i_{1} \dots i_{n}}$ being stationary, centered and having finite second moments. Here we denote by $(\nabla \cdot \sigma^{n}_{i_{1} \dots i_{n}})_{j}= \sum_{k=1}^{d} \partial_{k} (\sigma^{n}_{i_{1} \dots i_{n}})_{jk}$, $1 \leq j \leq d$.
\end{enumerate}
\end{defn}

\begin{prop} \label{momentbounds}
All the quantities in definition \ref{correctors} exist and they satisfy for all $1 \leq n \leq \tilde{d}$ and $p \geq 1$
\begin{equs} \label{momentboundsest}
|\alpha^{n}| \leq 1, \quad \big \langle |\nabla \phi^{n}|^{p} \big \rangle^{1/p}\lesssim_{p,n} 1, \quad \big \langle | \phi^{n}(x)|^{p} \big \rangle^{1/p} + \big \langle | \sigma^{n}(x)|^{p} \big \rangle^{1/p}\lesssim_{p,n} \mu_{d,n}(x)
\end{equs}
where
\begin{equs}
\mu_{d,n}(x) := \begin{cases} 1, &\quad \quad \text{ if } \ n < \tilde{d} \\
\ln^{1/2}(2+|x|), &\quad \quad \text{ if } \ n = \tilde{d} \ \text{ and } \ d \ \text{ even}\\
1+|x|^{1/2}, &\quad \quad \text{ if } \ n = \tilde{d} \ \text{ and } \ d \ \text{ odd}. \end{cases}
\end{equs}
\end{prop}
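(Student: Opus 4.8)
The plan is to argue by induction on $n$, the base case $n=1$ being the classical theory of the first-order corrector, flux corrector and effective matrix via the spectral gap inequality. So assume $\phi^{m},\sigma^{m},\bar a^{m}$ have been constructed for all $m\le n-1$ with the properties asserted in Proposition \ref{momentbounds}. First I would settle existence at level $n$. By the inductive hypothesis the field $b:=\bigl(a\,\phi^{n-1}_{i_{1}\dots i_{n-1}}-\sigma^{n-1}_{i_{1}\dots i_{n-1}}\bigr)e_{i_{n}}$ has locally finite second moments, and the flux relation (\ref{fluxcorr}) at level $n-1$ shows that, once the deterministic matrix $\bar a^{n}_{i_{1}\dots i_{n-1}}e_{i_{n}}$ has been subtracted — which is exactly the purpose of (\ref{effectivecoeff}) — the divergence $\nabla\cdot b$ agrees with the divergence of a field having a well-defined stationary $L^{2}$ potential part. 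Hence the Lax--Milgram (Riesz) theorem on the Hilbert space of gradients of stationary square-integrable functions produces a unique stationary, centered $\nabla\phi^{n}_{i_{1}\dots i_{n}}$ solving (\ref{corr-eq}); the potential $\phi^{n}_{i_{1}\dots i_{n}}$ is recovered by integrating this gradient and made unique by requiring it to have vanishing average on the unit ball, which together with $\langle\nabla\phi^{n}\rangle=0$ forces $\langle\phi^{n}(x)\rangle=0$ for every $x$. The skew-symmetric $\sigma^{n}$ is built componentwise from (\ref{fluxcorr}) by the same argument, and the deterministic bound $|\bar a^{n}|\lesssim1$ is immediate from (\ref{effectivecoeff}), Cauchy--Schwarz, the $L^{2}(\langle\cdot\rangle)$-bounds on $\nabla\phi^{n}$ and $\phi^{n-1}$, and the boundedness of $a$.

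Second, I would prove the uniform bounds $\langle|\nabla\phi^{n}|^{p}\rangle^{1/p}\lesssim1$ and $\langle|\nabla\sigma^{n}|^{p}\rangle^{1/p}\lesssim1$ from the spectral gap inequality. Differentiating (\ref{corr-eq}) with respect to $G$ shows that, at a fixed perturbation point $z$, the sensitivity $v:=\partial\nabla\phi^{n}/\partial G(z)$ solves an equation $-\nabla\cdot a\nabla v=\nabla\cdot\bigl(\delta(\cdot-z)\,A'(G(z))(\nabla\phi^{n}+\text{lower order})+\partial b/\partial G(z)\bigr)$; testing with $v$, using Caccioppoli's inequality and the inductive control of $b$, one estimates $\int|\partial\nabla\phi^{n}/\partial G(z)|^{2}\,dz$ by weighted local averages of $|\nabla\phi^{n}|$, $|A'(G)|$ and the lower-order correctors. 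Inserting this into the spectral gap inequality bounds the variance of any spatial average of $\nabla\phi^{n}$ by strictly lower-order quantities; a buckling argument, first in $L^{2}(\langle\cdot\rangle)$ and then, through the $L^{p}$-version of the spectral gap inequality, in every $L^{p}(\langle\cdot\rangle)$, closes the estimate, and stationarity upgrades it to the pointwise bound. The identical scheme, based on (\ref{fluxcorr}), handles $\nabla\sigma^{n}$.

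Third — the core of the argument — I would establish $\langle|\phi^{n}(x)|^{p}\rangle^{1/p}+\langle|\sigma^{n}(x)|^{p}\rangle^{1/p}\lesssim\mu_{d,n}(x)$. Since $\phi^{n}$ and $\sigma^{n}$ are centered at every point, their entire size is a fluctuation, so the spectral gap inequality applies directly: one represents the Malliavin derivative $\partial\phi^{n}(x)/\partial G(z)$ through the $a$-Green function of $-\nabla\cdot a\nabla$ and then controls $\int|\partial\phi^{n}(x)/\partial G(z)|^{2}\,dz$. This is carried out by combining annealed Green-function estimates — using the H\"older bound (\ref{a-holder}) at small scales and the large-scale $C^{k,\alpha}$-regularity theory for $a$-harmonic functions at large scales — with the integrable decay of the correlations of $b$, which is inherited through the induction from (\ref{c-decay}); a bootstrap that starts from a crude algebraic growth bound and successively improves it then yields the sharp weight $\mu_{d,n}(x)$. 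The induction has to be run with $x$-weighted Caccioppoli and Green-function bounds, because the data $b$ at level $n$ already involves the mildly growing $\phi^{n-1},\sigma^{n-1}$; what keeps $\phi^{n}$ bounded throughout the sub-critical range $n<\tilde d$, instead of picking up a logarithm one step earlier, is an improved summability of the correlations of $b$ — itself a consequence of the mean-zero normalization enforced by $\bar a^{n}$ and of the skew-symmetry of $\sigma^{n-1}$, which remove the zero-frequency part of the right-hand side.

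The main obstacle is exactly this last point: propagating the sharp weight $\mu_{d,n}$ through the hierarchy, i.e.\ showing that each corrector level gains precisely one order of potential-theoretic decay, so that the bound degenerates only at the critical order $n=\tilde d$ — logarithmically in even dimension, like $1+|x|^{1/2}$ in odd dimension — and not before. This forces one to use in tandem (i) the large-scale regularity theory, to control the polynomial growth of $a$-harmonic correctors; (ii) the quantitative ergodicity encoded in the spectral gap inequality, to control fluctuations of spatial averages of the correctors; and (iii) the algebraic structure of Definition \ref{correctors}, so that the successive right-hand sides are genuinely mean-zero. Only qualitative integrability of the correlations is needed here — which holds since $\alpha_{0}>0$ — and $\alpha_{0}$ itself does not appear in $\mu_{d,n}$; it will enter only later, in the covariance estimates underlying Theorem \ref{mainthm}.
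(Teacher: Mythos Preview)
The paper does not prove Proposition~\ref{momentbounds} at all: it is quoted verbatim from \cite{DO20} (Definition~2.1 and Proposition~2.2 there), as the sentence immediately preceding Definition~\ref{correctors} makes explicit. So there is no ``paper's own proof'' to compare your attempt against; the proposition is imported as a black box and only \emph{used} in the proofs of Lemma~\ref{bounds-hj} and Theorem~\ref{mainthm}.

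That said, your sketch is a faithful high-level summary of how the result is actually established in \cite{DO20} and the earlier literature it builds on: induction on the order $n$, Lax--Milgram on the space of stationary gradients for existence, sensitivity calculus combined with the ($L^{p}$-version of the) spectral gap inequality for the gradient moment bounds, and then annealed Green-function/large-scale regularity estimates to control the growth of $\phi^{n}$ and $\sigma^{n}$ themselves, with the critical weight $\mu_{d,n}$ appearing precisely at $n=\tilde d$. Your identification of the main difficulty --- propagating the sharp weight through the hierarchy so that no degeneration occurs before the critical order --- is also correct. If you want to turn this outline into an actual proof, the place to look is \cite{DO20}; the present paper neither reproduces nor modifies that argument.
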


Next we explain why definition (\ref{def_stand_comm}) is reasonably derived from the definitions of higher-order commutators given in \cite{DO20}. The \textit{homogenization commutator} $\Xi^{1}[\nabla w] = (a-\bar{a}) \nabla w$ naturally extends to the higher-order as
\begin{equs} 
\Xi^{n}[\nabla w]:= (a- \sum_{k=1}^n \bar{a}^{k}_{i_1\dots i_{k-1}}  \partial ^{k-1}_{i_1 \dots i_{k-1}} ) \nabla w= (a-\bar{a}) \nabla w - \sum_{k=2}^n \bar{a}^{k}_{i_1\dots i_{k-1}}  \partial ^{k-1}_{i_1 \dots i_{k-1}} \nabla w.\\
\ \label{def_comm}
\end{equs}
Then the \textit{standard homogenization commutator} $\Xi^{o,n} [\nabla \bar{w}]$ is given by $\Xi^{n}$ applied to the nth-order Taylor polynomial of the nth-order two-scale expansion of $\bar{w}$. However a more explicit formula for $\Xi^{o,n} [\nabla \bar{w}]$ is available (see Lemma 3.5 in \cite{DO20}) and if this formula is applied to the linear functions $\bar{w} (x)=x_{i}$ we get
\begin{equs}
\Xi_{ij}^{o,n}= e_{j} \cdot  \Xi^{n} [\nabla \phi_{i}^{1}+e_{i}].
\end{equs}
Now for our analysis, especially when deriving representation formulas for the Malliavin derivatives, it is more convenient to work with (\ref{def_stand_comm})
which is defined through the transposes of the higher-order effective coefficients. For (\ref{def_stand_comm}), we use the following alternative representation of $\Xi^{n}$
\begin{equs} \label{def_comm_trans}
e_{j} \cdot \Xi^{n} [\nabla w] = a^{*} e_{j} \cdot \nabla w - \sum_{k=1}^n (-1)^{k-1}  \bar{a}^{*,k}_{ji_1\dots i_{k-2}}e_{i_{k-1}} \cdot \partial  ^{k-1}_{i_1 \dots i_{k-1}}\nabla w.
\end{equs}
The above is a consequence of Lemma 2.4 in \cite{DO20} which extends the fact $\bar{a}^{*,1}= (\bar{a}^{1})^{*}$ to the higher-order,
\begin{equs} \label{symmetries}
Sym_{i_{1} \dots i_{n}} (e_{j } \cdot \bar{a}^{n}_{i_1\dots i_{n-1}}  e_{{i_{n}}}) = (-1)^{n+1} Sym_{i_{1} \dots i_{n}} (e_{i_{n}} \cdot \bar{a}^{*,n}_{ji_1\dots i_{n-2}}  e_{{i_{n-1}}}) 
 \end{equs}
 where $Sym_{i_{1} \dots i_{k}} T_{i_{1} \dots i_{k}} := \frac{1}{k!} \sum _{\sigma \in S_{k}} T_{i_{\sigma (1)} \dots i_{\sigma (k)} }$, $T$ a kth-order tensor and $S_{k}$ the set of permutations of $\{1, \dots k \}$. 
 
 \vspace{.5em}
 
 \section{Proof of Main Theorem}
 
Since we intend to bound quantity (\ref{correlation}) via the covariance estimate (\ref{cov_est}), we first derive a suitable representation formula for the derivative of $F_{ij}^{o,n} := \int_{\R^d} g(x) \Xi_{ij}^{o,n}(x) \dx$. Our intention is to get as many derivatives as possible for the r.h.s of the equation that the term $\nabla h_{j}$ satisfies. We show the following

  \begin{prop} [Representation formula] \label{repr_form_prop}
 \begin{equs} \label{repr_form}
  \frac{\partial F_{ij}^{o,n}}{\partial a} = (\nabla \phi _{i}^{1}+e_{i}) \otimes  \left( \sum_{k=0}^{n} \partial^{k}_{i_{1} \dots i_{k}} g (\nabla \phi_{ji_{1} \dots i_{k}}^{*,k+1} +e_{i_{k}} \phi_{ji_{1} \dots i_{k-1}}^{*,k}) + \nabla h_{j} \right) 
 \end{equs}
 with $h_{j}$ solving
 \begin{equs} \label{eq-hj}
 - \nabla \cdot a^{*} \nabla h_{j} = \nabla \cdot \left( ( a^{*}\phi_{ji_{1} \dots i_{n-1}}^{*,n} - \sigma_{ji_{1} \dots i_{n-1}}^{*,n}  ) \nabla \partial^{n-1}_{i_{1} \dots i_{n-1}} g \right).
 \end{equs}
 Note that in the sum appears in (\ref{repr_form}), we use the convention that the $k=0$-term is just $\nabla \phi_{j}^{*,1}+e_{j}$, while the $k=n$-term is just  $\partial^{n}_{i_{1} \dots i_{n}} g \ e_{i_{n}} \phi_{ji_{1} \dots i_{n-1}}^{*,n}$ (see definition \ref{correctors}).

 \end{prop}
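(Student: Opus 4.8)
The plan is to compute the directional derivative $\partial_{\delta a}F_{ij}^{o,n}:=\lim_{t\to0}t^{-1}\big(F_{ij}^{o,n}(a+t\,\delta a)-F_{ij}^{o,n}(a)\big)$ along a compactly supported perturbation $\delta a$, to identify it with $\int\frac{\partial F_{ij}^{o,n}}{\partial a}:\delta a\dx$ as in \eqref{def-frechet}, and then to transform it by successively integrating by parts against the adjoint corrector hierarchy $\phi^{*,k},\sigma^{*,k}$. Setting $v_i:=\nabla\phi_i^1+e_i$ and using \eqref{def_comm_trans}, one has $F_{ij}^{o,n}=\int g\,\big(e_j\cdot(a-\bar a^1)v_i-\sum_{k=2}^{n}(-1)^{k-1}\bar a^{*,k}_{ji_1\dots i_{k-2}}e_{i_{k-1}}\cdot\partial^{k-1}_{i_1\dots i_{k-1}}\nabla\phi_i^1\big)\dx$. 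Since the effective tensors $\bar a^k$ are deterministic, the $a$-dependence sits only in $a$ itself and in $\phi_i^1$; the latter is differentiable in $a$, with $\psi:=\partial_{\delta a}\phi_i^1$ solving the linearised corrector equation $-\nabla\cdot a\nabla\psi=\nabla\cdot(\delta a\,v_i)$ (so $\nabla\psi\in L^2(\R^d)$). Differentiating term by term gives $\partial_{\delta a}F_{ij}^{o,n}=\int g\,e_j\cdot\delta a\,v_i\dx+I$, with $I:=\int g\big((a^{*}-\bar a^{*,1})e_j\cdot\nabla\psi-\sum_{k=2}^{n}(-1)^{k-1}\bar a^{*,k}_{ji_1\dots i_{k-2}}e_{i_{k-1}}\cdot\partial^{k-1}_{i_1\dots i_{k-1}}\nabla\psi\big)\dx$ collecting all $\nabla\psi$-contributions. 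The key point is that whenever one reaches a term $\int(\nabla\zeta)\cdot a\nabla\psi$, testing the $\psi$-equation against $\zeta$ rewrites it, via $-\nabla\cdot a\nabla\psi=\nabla\cdot(\delta a\,v_i)$, as $-\int(\nabla\zeta)\cdot\delta a\,v_i$, i.e.\ (up to the pairing convention of \eqref{def-frechet}) a contribution $(\nabla\phi_i^1+e_i)\otimes\nabla\zeta$ to $\frac{\partial F_{ij}^{o,n}}{\partial a}$; this explains why $\nabla\phi_i^1+e_i$ is always the left factor.

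The core of the argument is an induction on $k=1,\dots,n$ driven by the flux-corrector identity read off from \eqref{fluxcorr} for the adjoint hierarchy (for $k=1$ one uses $\phi^{*,0}:=1$, $\sigma^{*,0}:=0$ and the void index conventions): for every $k\ge1$, $(a^{*}\phi^{*,k-1}_{ji_1\dots i_{k-2}}-\sigma^{*,k-1}_{ji_1\dots i_{k-2}})e_{i_{k-1}}=\nabla\cdot\sigma^{*,k}_{ji_1\dots i_{k-1}}-a^{*}\nabla\phi^{*,k}_{ji_1\dots i_{k-1}}+\bar a^{*,k}_{ji_1\dots i_{k-2}}e_{i_{k-1}}$. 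Starting from the $k=1$ piece $\int g\,(a^{*}-\bar a^{*,1})e_j\cdot\nabla\psi$ of $I$, assume that after step $k-1$ the only untreated $\nabla\psi$-term is $\int\partial^{k-1}_{i_1\dots i_{k-1}}g\,(a^{*}\phi^{*,k-1}_{ji_1\dots i_{k-2}}-\sigma^{*,k-1}_{ji_1\dots i_{k-2}})e_{i_{k-1}}\cdot\nabla\psi$. Combining it with the $k$-th term of $I$ — legitimate after integrating that term by parts $(k-1)$ times, since $\bar a^{*,k}$ is constant — the $\bar a^{*,k}$'s cancel through the identity and one is left with $\int\partial^{k-1}_{i_1\dots i_{k-1}}g\,\big(\nabla\cdot\sigma^{*,k}_{ji_1\dots i_{k-1}}-a^{*}\nabla\phi^{*,k}_{ji_1\dots i_{k-1}}\big)\cdot\nabla\psi$. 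In the $\nabla\cdot\sigma^{*,k}$-part, an integration by parts moves the divergence onto $\partial^{k-1}g$ and $\nabla\psi$; the second-derivative-of-$\psi$ contribution vanishes by skew-symmetry of $\sigma^{*,k}$, leaving $-\int\nabla\psi\cdot\sigma^{*,k}_{ji_1\dots i_{k-1}}\nabla\partial^{k-1}_{i_1\dots i_{k-1}}g$. In the $a^{*}\nabla\phi^{*,k}$-part, one writes $\partial^{k-1}_{i_1\dots i_{k-1}}g\,\nabla\phi^{*,k}_{ji_1\dots i_{k-1}}=\nabla\big(\partial^{k-1}_{i_1\dots i_{k-1}}g\,\phi^{*,k}_{ji_1\dots i_{k-1}}\big)-\phi^{*,k}_{ji_1\dots i_{k-1}}\nabla\partial^{k-1}_{i_1\dots i_{k-1}}g$; testing the $\psi$-equation against $\zeta=\partial^{k-1}_{i_1\dots i_{k-1}}g\,\phi^{*,k}_{ji_1\dots i_{k-1}}$ turns the gradient term into a closed $(\nabla\phi_i^1+e_i)\otimes\nabla\zeta$-contribution whose expansion contains $\partial^{k-1}_{i_1\dots i_{k-1}}g\,\nabla\phi^{*,k}_{ji_1\dots i_{k-1}}$, which — added to the closed term $\phi^{*,k-1}_{ji_1\dots i_{k-2}}\nabla\partial^{k-2}_{i_1\dots i_{k-2}}g=\partial^{k-1}_{i_1\dots i_{k-1}}g\,e_{i_{k-1}}\phi^{*,k-1}_{ji_1\dots i_{k-2}}$ produced at step $k-1$ (for $k=1$, read the direct term $\int g\,e_j\cdot\delta a\,v_i$ instead) — completes the $(k-1)$-st term of the sum in \eqref{repr_form}, while the remaining piece $\phi^{*,k}_{ji_1\dots i_{k-1}}\nabla\partial^{k-1}_{i_1\dots i_{k-1}}g$ is the closed term to be combined at step $k+1$. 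The leftover $-\int\phi^{*,k}_{ji_1\dots i_{k-1}}\nabla\partial^{k-1}_{i_1\dots i_{k-1}}g\cdot a\nabla\psi$ recombines with the $\sigma^{*,k}$-part into $\int\partial^{k}_{i_1\dots i_k}g\,(a^{*}\phi^{*,k}_{ji_1\dots i_{k-1}}-\sigma^{*,k}_{ji_1\dots i_{k-1}})e_{i_k}\cdot\nabla\psi$, which is the untreated $\nabla\psi$-term for step $k+1$ when $k<n$ and, when $k=n$, is exactly the right-hand side of \eqref{eq-hj} tested against $\psi$; this closes the induction.

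In the final step, the leftover $\int(a^{*}\phi^{*,n}_{ji_1\dots i_{n-1}}-\sigma^{*,n}_{ji_1\dots i_{n-1}})\nabla\partial^{n-1}_{i_1\dots i_{n-1}}g\cdot\nabla\psi$ — with no remaining $\bar a$-term of $I$ to pair with — equals $-\int(\nabla h_j)\cdot a\nabla\psi$ by \eqref{eq-hj} tested against $\psi$, and one last application of the $\psi$-equation (against $\zeta=h_j$) turns it into $\int(\nabla h_j)\cdot\delta a\,v_i$, the $(\nabla\phi_i^1+e_i)\otimes\nabla h_j$-contribution. Collecting everything, the closed contributions assemble $\sum_{k=0}^{n}\partial^{k}_{i_1\dots i_k}g\,(\nabla\phi^{*,k+1}_{ji_1\dots i_k}+e_{i_k}\phi^{*,k}_{ji_1\dots i_{k-1}})$ — the $k=0$ term collected from $g\,e_j$ and the $\nabla\phi^{*,1}_j$-part of $\nabla(g\,\phi^{*,1}_j)$, the $k=n$ term being the last remainder $\phi^{*,n}_{ji_1\dots i_{n-1}}\nabla\partial^{n-1}_{i_1\dots i_{n-1}}g$, per the conventions of the proposition — and adding the $\nabla h_j$-contribution yields \eqref{repr_form}.

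The only analytic inputs are elementary — integrations by parts, the ($a$- or $a^{*}$-)elliptic equations for $\psi$, $h_j$ and the correctors, and the skew-symmetry of the flux correctors — and all boundary terms vanish because $g$ is compactly supported while $\nabla\psi$, $\nabla h_j$ and the localised fields $g\,\nabla\phi^{*,k}$, $\phi^{*,k}\nabla\partial^{m}g$ lie in $L^2(\R^d)$ (by Caccioppoli/Meyers estimates and the moment bounds of Proposition \ref{momentbounds}); the $a$-differentiability of $\phi_i^1$ and the well-posedness of the linearised equation are standard. I therefore expect the genuine difficulty to be organisational: arranging the telescoping of the remainder terms and matching the signs $(-1)^{k-1}$ and the index pattern of \eqref{repr_form}, and correctly treating the degenerate ends $k=0$ and $k=n$ of the sum.
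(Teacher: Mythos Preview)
Your proof is correct and follows essentially the same route as the paper: differentiate in the direction $\delta a$, introduce the linearised corrector $\psi$ (the paper's $\delta\phi_i$), and then iteratively reduce the remaining $\nabla\psi$-term via the adjoint flux-corrector identity \eqref{corr-relation}, Leibniz' rule, the skew-symmetry of $\sigma^{*,k}$, and testing the $\psi$-equation against $\partial^{k-1}g\,\phi^{*,k}$. The only cosmetic difference is that the paper runs the induction on the order $n$ of the commutator (via $F^{o,n}=F^{o,n-1}-\int\partial^{n-1}_{i_1\dots i_{n-1}}g\,\bar a^{*,n}_{ji_1\dots i_{n-2}}e_{i_{n-1}}\cdot\nabla\delta\phi_i$) rather than on the internal index $k$ within fixed $n$, but the computation at each step is identical.
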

 
  \begin{proof}
 We have (integrating by parts)
  \begin{equs}
 {}&\frac{F_{ij}^{o,n}(a+t \delta a) -F_{ij}^{o,n}(a)}{t} \\
 &\quad =  \frac{1}{t}  \int g e_{j} \cdot (a+t \delta a-\bar{a}^1) (\nabla \phi _i^1(a+t \delta a)+e_i) \\ 
 &\quad \ \ -\frac{1}{t}  \int  \sum_{k=2}^n (-1)^{2(k-1)}\ \partial ^{k-1}_{i_1 \dots i_{k-1}}g \ \bar{a}^{*,k}_{ji_1\dots i_{k-2}}e_{i_{k-1}} \cdot  \nabla \phi _i^1(a+t \delta a) \\
 &\quad \ \ -\frac{1}{t}  \int  g e_j \cdot (a-\bar{a}^1) (\nabla \phi _i^{1}(a)+e_i)  \\
 &\quad \ \ +\frac{1}{t}  \int   \sum_{k=2}^n (-1)^{2(k-1)}\ \partial ^{k-1}_{i_1 \dots i_{k-1}}g \  \bar{a}^{*,k}_{ji_1\dots i_{k-2}}e_{i_{k-1}} \cdot \nabla \phi _i^{1}(a) \\
&\quad  = \int g e_{j} \cdot \delta a \left(e_{i} + \nabla \phi_{i}^{1}(a+ t \delta a)\right) +  \int g e_{j} \cdot (a- \bar{a}^{1}) \frac{ \nabla \phi_{i}^{1}(a+ t \delta a) -\nabla \phi_{i}^{1}(a) }{t} \\
&\quad \ \ -\int   \sum_{k=2}^n  \partial ^{k-1}_{i_1 \dots i_{k-1}}g \ \bar{a}^{*,k}_{ji_1\dots i_{k-2}}e_{i_{k-1}} \cdot  \frac{\nabla \phi _i^1(a+t \delta a) -\nabla \phi_{i}^{1}(a) }{t}
 \end{equs}
 where $\nabla \phi _i^1(a+t \delta a) -\nabla \phi_{i}^{1}(a) := \nabla \psi _{i}(a,t \delta a)$, with $- \nabla \cdot (a+t \delta a) \frac{\nabla  \psi_{i}}{t}= \nabla \cdot \delta a (\nabla \phi_{i}^{1}(a)+e_{1})$ (see section 3.4 in \cite{GNO19}). Thus letting $t \to 0$ we get  
 \begin{equs} 
  \lim_{t \to 0}\frac{F_{ij}^{o,n}(a+t \delta a) -F_{ij}^{o,n}(a)}{t} &= \int g e_{j} \cdot \delta a \left(e_{i} + \nabla \phi_{i}^{1}\right) + \int ge_{j} \cdot (a-\bar{a}^{1}) \nabla \delta \phi_{i}  \\ 
 &\ \ \  - \int \sum_{k=2}^n  \partial ^{k-1}_{i_1 \dots i_{k-1}}g \ \bar{a}^{*,k}_{ji_1\dots i_{k-2}}e_{i_{k-1}} \cdot \nabla \delta \phi _i \label{frechet-der}
 \end{equs}
 where 
 \begin{equs} \label{eq-for-delta phi}
 -\nabla \cdot a \nabla \delta \phi_{i} = \nabla \cdot \delta a (\nabla \phi_{i}^{1}+e_{i}).
 \end{equs}
 
Next we further analyze  the r.h.s of (\ref{frechet-der}) to get the desired representation formula. The main ingredient we use is the following relation between the correctors (see (\ref{fluxcorr}) in definition \ref{correctors})
\begin{equs}\label{corr-relation}
\left( a \phi^{k-1}_{i_{1}\dots i_{k-1}} - \sigma^{k-1}_{i_{1}\dots i_{k-1}} \right) e_{i_{k}}= -a \nabla \phi^{k}_{i_{1}\dots i_{k}} + \nabla \cdot \sigma^{k}_{i_{1}\dots i_{k}}  - \bar{a}^{k}_{i_{1}\dots i_{k-1}}e_{i_{k}}
\end{equs}
which for $k=1$ reduces to the well known $(a-\bar{a}^{1})e_{i}=-a \nabla \phi^{1}_{i} + \nabla \cdot \sigma^{1}_{i}$.

We show by induction on $n$ the following
\begin{equs} 
\lim_{t \to 0}\frac{F_{ij}^{o,n}(a+t \delta a) -F_{ij}^{o,n}(a)}{t} &= \int \sum_{k=0}^{n} \partial^{k}_{i_{1} \dots i_{k}} g (\nabla \phi_{ji_{1} \dots i_{k}}^{*,k+1} +e_{i_{k}} \phi_{ji_{1} \dots i_{k-1}}^{*,k}) \cdot \delta a (\nabla \phi _{i}^{1}+e_{i}) \\
&\ \ \ + \int \left( a^{*}\phi_{ji_{1} \dots i_{n-1}}^{*,n} - \sigma_{ji_{1} \dots i_{n-1}}^{*,n}  \right) \nabla \partial^{n-1}_{i_{1} \dots i_{n-1}} g \cdot \nabla \delta \phi_{i}.  \label{maineqofprop}
\end{equs}
Note that the above gives the result (via (\ref{def-frechet})). Indeed, testing equation (\ref{eq-hj}) with $ \delta \phi_{i}$ the second term of the r.h.s of (\ref{maineqofprop}) turns into $-\int a^{*}\nabla h_{j}\cdot \nabla \delta \phi_{i}=-\int \nabla h_{j}\cdot a \nabla \delta \phi_{i}=\int \nabla h_{j}\cdot  \delta a (\nabla \phi _{i}^{1}+e_{i})$. Where the last equality is obtained by testing equation (\ref{eq-for-delta phi}) with $h_{j}$. 

Now for the induction we start with $n=1$. In that case (\ref{frechet-der}) reduces to 
\begin{equs}
\lim_{t \to 0}\frac{F_{ij}^{o,1}(a+t \delta a) -F_{ij}^{o,1}(a)}{t} = \int g e_{j} \cdot \delta a \left(e_{i} + \nabla \phi_{i}^{1}\right) + \int ge_{j} \cdot (a-\bar{a}^{1}) \nabla \delta \phi_{i} 
\end{equs}
We work with the second term of the r.h.s.
\begin{equs}
\int ge_{j} \cdot (a-\bar{a}^{1}) \nabla \delta \phi_{i} &= \int g(a^{*}-\bar{a}^{*,1})e_{j} \cdot  \nabla \delta \phi_{i} = \int (-a^{*}g \nabla \phi^{*,1}_{j} + g\nabla \cdot \sigma^{*,1}_{j}) \cdot  \nabla \delta \phi_{i} \\
&= \int \left( -a^{*} \nabla (g \phi^{*,1}_{j} ) + a^{*}\phi^{*,1}_{j}  \nabla g - \sigma^{*,1}_{j} \nabla g \right) \cdot  \nabla \delta \phi_{i} 
\end{equs}
where the first two terms result from Leibniz rule and the last from the following property of $\sigma_{i}$
\begin{equs}
\nabla \cdot (g \nabla \cdot \sigma_{i}) = - \nabla \cdot ( \sigma_{i} \nabla g), \ \ \text{ for any smooth enough } \ g,
\end{equs}
which is an easy consequence of the skew-symmetry of $\sigma_{i}$. Thus we have
\begin{equs}
\lim_{t \to 0}\frac{F_{ij}^{o,1}(a+t \delta a) -F_{ij}^{o,1}(a)}{t} &= \int g e_{j} \cdot \delta a \left(e_{i} + \nabla \phi_{i}^{1}\right) + \int -a^{*} \nabla (g \phi^{*,1}_{j} ) \cdot  \nabla \delta \phi_{i} \\
&\ \ \ + \int \left( a^{*}\phi^{*,1}_{j}  - \sigma^{*,1}_{j} \right) \nabla g  \cdot  \nabla \delta \phi_{i}. 
\end{equs}
Note that testing equation (\ref{eq-for-delta phi}) with $g \phi^{*,1}_{j}$ we get $-\int a^{*} \nabla (g \phi^{*,1}_{j} ) \cdot  \nabla \delta \phi_{i} = \int  \nabla (g \phi^{*,1}_{j} ) \cdot   \delta a (\nabla \phi^{1}_{i} +e_{i})$. Then
\begin{equs}
\lim_{t \to 0}\frac{F_{ij}^{o,1}(a+t \delta a) -F_{ij}^{o,1}(a)}{t} &= \int \left(  \left( e_{j}+\nabla \phi_{j}^{*,1} \right)g+  \phi_{j}^{*,1} \nabla g\right)\cdot \delta a \left(e_{i} + \nabla \phi_{i}^{1}\right) \\
&\ \ \ + \int \left( a^{*}\phi^{*,1}_{j}  - \sigma^{*,1}_{j} \right) \nabla g  \cdot  \nabla \delta \phi_{i}
\end{equs}
which is exactly (\ref{maineqofprop}) for $n=1$. Next assume that (\ref{maineqofprop}) holds for $n-1$. We show that it is true for $n$. Indeed, by (\ref{frechet-der}) we see that
\begin{equs}
\lim_{t \to 0} \frac{F_{ij}^{o,n}(a+t \delta a) -F_{ij}^{o,n}(a)}{t} &=  \lim_{t \to 0}\frac{F_{ij}^{o,n-1}(a+t \delta a) -F_{ij}^{o,n-1}(a)}{t} \\
&\ \ \  - \int   \partial ^{n-1}_{i_1 \dots i_{n-1}}g \ \bar{a}^{*,n}_{ji_1\dots i_{n-2}}e_{i_{n-1}} \cdot \nabla \delta \phi _i \\
 &= \int \sum_{k=0}^{n-1} \partial^{k}_{i_{1} \dots i_{k}} g (\nabla \phi_{ji_{1} \dots i_{k}}^{*,k+1} +e_{i_{k}} \phi_{ji_{1} \dots i_{k-1}}^{*,k}) \cdot \delta a (\nabla \phi _{i}^{1}+e_{i}) \\
&\ \ \ + \int \left( a^{*}\phi_{ji_{1} \dots i_{n-2}}^{*,n-1} - \sigma_{ji_{1} \dots i_{n-2}}^{*,n-1}  \right)  \partial^{n-1}_{i_{1} \dots i_{n-1}} g \ e_{i_{n-1}}\cdot \nabla \delta \phi_{i} \\
&\ \ \  - \int   \partial ^{n-1}_{i_1 \dots i_{n-1}}g \ \bar{a}^{*,n}_{ji_1\dots i_{n-2}}e_{i_{n-1}} \cdot \nabla \delta \phi _i.
\end{equs}
We use again (\ref{corr-relation}) for the middle term
\begin{equs}
&\int \partial^{n-1}_{i_{1} \dots i_{n-1}} g  \left( a^{*}\phi_{ji_{1} \dots i_{n-2}}^{*,n-1} - \sigma_{ji_{1} \dots i_{n-2}}^{*,n-1}  \right)  \ e_{i_{n-1}}\cdot \nabla \delta \phi_{i} \\
&\ \ \ = \int \partial^{n-1}_{i_{1} \dots i_{n-1}} g  \left( -a^{*} \nabla \phi^{*,n}_{ji_{1}\dots i_{n-1}} + \nabla \cdot \sigma^{*,n}_{ji_{1}\dots i_{n-1}}  - \bar{a}^{*,n}_{ji_{1}\dots i_{n-2}}e_{i_{n-1}}  \right)  \cdot \nabla \delta \phi_{i}.
\end{equs}
Hence
\begin{equs} 
 \lim_{t \to 0}\frac{F_{ij}^{o,n}(a+t \delta a) -F_{ij}^{o,n}(a)}{t} &= \int \sum_{k=0}^{n-1} \partial^{k}_{i_{1} \dots i_{k}} g (\nabla \phi_{ji_{1} \dots i_{k}}^{*,k+1} +e_{i_{k}} \phi_{ji_{1} \dots i_{k-1}}^{*,k}) \cdot \delta a (\nabla \phi _{i}^{1}+e_{i}) \\
&\ \ \ - \int a^{*}\nabla (\partial^{n-1}_{i_{1} \dots i_{n-1}} g \   \phi^{*,n}_{ji_{1}\dots i_{n-1}} ) \cdot \nabla \delta \phi_{i}  \label{lasteqofprop} \\
&\ \ \ + \int \left ( a^{*}\phi^{*,n}_{ji_{1}\dots i_{n-1}} \nabla \partial^{n-1}_{i_{1} \dots i_{n-1}} g - \sigma^{*,n}_{ji_{1}\dots i_{n-1}}   \nabla \partial^{n-1}_{i_{1} \dots i_{n-1}} g   \right)\cdot \nabla \delta \phi_{i} 
\end{equs}
where we used again Leibniz rule and the skew-symmetry of $\sigma^{*,n}_{ji_{1}\dots i_{n-1}}$. For the middle term we use equation (\ref{eq-for-delta phi}) tested with $\partial^{n-1}_{i_{1} \dots i_{n-1}} g \   \phi^{*,n}_{ji_{1}\dots i_{n-1}}$ to get 
\begin{equs}
&- \int a^{*}\nabla (\partial^{n-1}_{i_{1} \dots i_{n-1}} g \   \phi^{*,n}_{ji_{1}\dots i_{n-1}} ) \cdot \nabla \delta \phi_{i} \\
&\ \ \ \ \ \ \ \ \ \ \ \ = \int \nabla (\partial^{n-1}_{i_{1} \dots i_{n-1}} g \   \phi^{*,n}_{ji_{1}\dots i_{n-1}} ) \cdot \delta a (\nabla \phi _{i}^{1}+e_{i}) \\ 
& \ \ \ \ \ \ \ \ \ \ \ \ =\int \left(  \partial^{n}_{i_{1} \dots i_{n}} g \ \phi^{*,n}_{ji_{1}\dots i_{n-1}} e_{i_{n}}+ \partial^{n-1}_{i_{1} \dots i_{n-1}} g \  \nabla   \phi^{*,n}_{ji_{1}\dots i_{n-1}} \right) \cdot \delta a (\nabla \phi _{i}^{1}+e_{i}). 
\end{equs} 
Substituting in (\ref{lasteqofprop}) and recalling that for the sum $\sum_{k=0}^{n-1} $ we have the convention that the $(n-1)$-term is just $ \partial^{n-1}_{i_{1} \dots i_{n-1}} g \ e_{i_{n-1}} \phi_{ji_{1} \dots i_{n-2}}^{*,n-1}$ we conclude the proof.
 \end{proof}
 
 \vspace{.5em}
 
 Next we study the solution $h_{j}$ of (\ref{eq-hj}) deriving some bounds that will be useful for the proof of the main theorem. In the sequel we assume that $d$ is odd and we denote by $\tilde{d}:=\frac{d+1}{2}$ (note that the proof when $d$ is even is similar - the slightly different bound comes from the stochastic moment bounds of the correctors). 
 
 \begin{lemma} \label{bounds-hj}
 Let $h_{j}$ be a solution of (\ref{eq-hj}) with $n=\tilde{d}$, then for any $|x| > \frac{L}{2}$ it holds
 \begin{equs}
 \Big \langle \big |  \nabla h_{j}(x)  \big |^{4} \Big \rangle ^{1/4} \lesssim R^{-d/2}L^{-d}. \label{bound-hj}
 \end{equs}
 \end{lemma}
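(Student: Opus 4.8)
The plan is to obtain the bound on $\nabla h_j$ by deterministic elliptic estimates (energy and large-scale regularity) applied to equation \eqref{eq-hj} with $n=\tilde d$, combined with the stochastic moment bounds of Proposition \ref{momentbounds} for the correctors $\phi^{\tilde d}$ and flux correctors $\sigma^{\tilde d}$. First I would record that the right-hand side of \eqref{eq-hj} is $\nabla\cdot\bigl((a^{*}\phi^{*,\tilde d}_{ji_1\dots i_{\tilde d-1}}-\sigma^{*,\tilde d}_{ji_1\dots i_{\tilde d-1}})\,\nabla\partial^{\tilde d-1}_{i_1\dots i_{\tilde d-1}}g\bigr)$, and that $g$ is supported in $B_R$ with $|D^k\eta|\le C_{k,d}$, so that $\nabla\partial^{\tilde d-1}g$ is a function supported in $B_R$, bounded pointwise by $\lesssim R^{-d-\tilde d}$, hence with $L^2(\R^d)$ norm $\lesssim R^{-d-\tilde d+d/2}=R^{-d/2-\tilde d}$. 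The key point of the lemma is the \emph{spatial localization}: since we only evaluate $\nabla h_j$ at points $|x|>L/2$, far away from $\supp g\subset B_R$ with $R\ll L$, the relevant decay in $L$ must come from an interior/Caccioppoli-type estimate for a harmonic-type function away from the source.

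The main steps, in order. Step 1: an energy estimate. Testing \eqref{eq-hj} with $h_j$ itself (after subtracting a constant, or directly since $h_j$ has a stationary gradient) and using $\lambda$-ellipticity gives $\int|\nabla h_j|^2\lesssim \int |a^{*}\phi^{*,\tilde d}-\sigma^{*,\tilde d}|^2\,|\nabla\partial^{\tilde d-1}g|^2$, which after taking expectations and using \eqref{momentboundsest} (with $\mu_{d,\tilde d}(x)\lesssim 1+|x|^{1/2}$ for $d$ odd, and that the integrand is supported in $B_R$) yields $\bigl\langle\int_{\R^d}|\nabla h_j|^2\bigr\rangle\lesssim R\cdot R^{-2d-2\tilde d+d}=R^{-d-2\tilde d+1}$. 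Step 2: upgrade to a pointwise estimate far from the support. On the ball $B_{L/4}(x)$ with $|x|>L/2$, the function $h_j$ solves the homogeneous equation $-\nabla\cdot a^{*}\nabla h_j=0$ (the source term vanishes there since $\supp\nabla\partial^{\tilde d-1}g\subset B_R\subset B_{L/4}$, as $R\ll L$). By the large-scale (excess-decay / $C^{0,\alpha}$) regularity theory for $a$-harmonic functions — available in our ensemble thanks to \eqref{a-holder} and the corrector bounds, with a minimal radius $r_*(x)$ having bounded stochastic moments — one controls $|\nabla h_j(x)|$ by the spatial average $\bigl(\mean\int_{B_{L/4}(x)}|\nabla h_j|^2\bigr)^{1/2}$, up to the random constant $r_*$. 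Step 3: combine. The spatial average over $B_{L/4}(x)$ of $|\nabla h_j|^2$ is bounded by $L^{-d}\int_{\R^d}|\nabla h_j|^2$; inserting the Step 1 bound and using H\"older in probability to absorb the random minimal radius $r_*(x)$ (bounded moments) gives $\bigl\langle|\nabla h_j(x)|^4\bigr\rangle^{1/4}\lesssim \bigl(L^{-d}R^{-d-2\tilde d+1}\bigr)^{1/2}=L^{-d/2}R^{-d/2-\tilde d+1/2}$. With $\tilde d=\frac{d+1}{2}$ this is exactly $L^{-d/2}R^{-d/2-d/2}=R^{-d/2}L^{-d/2}$.

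I realize the target exponent in \eqref{bound-hj} is $R^{-d/2}L^{-d}$, not $R^{-d/2}L^{-d/2}$, so the naive "spread over one far ball" argument above is too lossy in $L$ by a factor $L^{d/2}$. The correct route is to exploit the \emph{mean-value / dipole decay} of the Green function of $-\nabla\cdot a^{*}\nabla$ more carefully: because the source is $\nabla\cdot(\text{something supported in }B_R)$ with the "something" of size $\lesssim R^{-d/2-\tilde d}$ in $L^2(B_R)$ and the source having vanishing (in a suitable sense, after using the structure) low moments, the decay of $|\nabla h_j(x)|$ is governed by $\int_{B_R}|\nabla_x\nabla_y G(x,y)|\,|a^{*}\phi^{*,\tilde d}-\sigma^{*,\tilde d}|\,|\nabla\partial^{\tilde d-1}g|\dy$, and the annealed second-derivative Green function estimate $\langle|\nabla_x\nabla_y G(x,y)|^2\rangle^{1/2}\lesssim |x-y|^{-d}$ (valid in this ensemble) gives, for $|x|>L/2$ and $y\in B_R$, a kernel of size $\lesssim L^{-d}$; integrating the bounded density $|\nabla\partial^{\tilde d-1}g|\lesssim R^{-d-\tilde d}$ over $B_R$ produces $R^{d}\cdot R^{-d-\tilde d}=R^{-\tilde d}$, and combining with the corrector moments (of size $\mu_{d,\tilde d}(y)\lesssim 1+R^{1/2}$ on $B_R$) one gets $\lesssim L^{-d}R^{-\tilde d}R^{1/2}=L^{-d}R^{-d/2}$, which is the claim. \textbf{The main obstacle} is thus precisely this point: the plain energy estimate only gives the right $R$-scaling but loses half a power of $L$, so one must instead use the annealed Green-function (second mixed derivative) bound $\langle|\nabla^2 G(x,\cdot)|^2\rangle\lesssim|\cdot-x|^{-2d}$ together with the fact that the source is in divergence form and compactly supported far from $x$ — establishing or citing that Green-function estimate in the present Gaussian ensemble, and carefully tracking the $R$-dependence coming from the $\tilde d{-}1$ derivatives landing on $g$ and from the $\mu_{d,\tilde d}$ growth of the correctors, is where the real work lies; the rest is bookkeeping with H\"older's inequality in probability.
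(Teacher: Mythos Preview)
Your diagnosis is right: the plain energy estimate plus one application of large-scale regularity on $B_{L/4}(x)$ only yields $R^{-d/2}L^{-d/2}$, and you correctly identify that the missing $L^{-d/2}$ must come from the \emph{dipole decay} of the solution far from a compactly supported divergence-form source. Your proposed fix via the annealed mixed-derivative Green-function bound $\langle|\nabla_x\nabla_y G(x,y)|^p\rangle^{1/p}\lesssim|x-y|^{-d}$ is a valid route and gives exactly the right exponents after the bookkeeping you sketch.

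The paper, however, does not invoke Green-function bounds as a black box. Instead it recovers the extra $L^{-d/2}$ by a \emph{duality argument}: introduce the auxiliary solution $v_j$ of $-\nabla\cdot a\nabla v_j=\nabla\cdot(\chi_{B_{L/4}^c}\nabla h_j)$, so that $\int_{B_{L/4}^c}|\nabla h_j|^2=\int\nabla v_j\cdot f_j$. The point is that $v_j$ is $a$-harmonic on $B_{L/2}$ (its source lives outside $B_{L/4}$), so the large-scale Lipschitz / mean-value property of \cite{GNO20} applied to $v_j$ gives $(\int_{B_R}|\nabla v_j|^2)^{1/2}\lesssim (R_0^*)^{d/2}L^{-d/2}(\int|\chi_{B_{L/4}^c}\nabla h_j|^2)^{1/2}$, which after absorbing yields $(\fint_{B_{L/4}(x)}|\nabla h_j|^2)^{1/2}\lesssim (R_0^*)^{d/2}L^{-d}(\int|f_j|^2)^{1/2}$. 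A second pass --- small-scale Schauder to go pointwise, then large-scale Lipschitz again from $B_{R}(x)$ to $B_{L/4}(x)$ --- upgrades this to $|\nabla h_j(x)|$, with random factors $C(a)$, $r^*(0)$, $r^*(x)$ all having bounded moments. In effect the paper's duality-plus-two-Lipschitz argument is a self-contained derivation of precisely the $|x|^{-d}$ Green-function decay you want to cite; your approach outsources that step, which is cleaner if the annealed $\nabla^2 G$ estimate is already on the shelf (and it is, in this Gaussian setting), while the paper's approach keeps everything internal to the large-scale regularity toolbox and avoids naming $G$ at all.
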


\vspace{.5em}

 \begin{proof}
 Recall that $h_{j}$ satisfies the equation $- \nabla \cdot a^{*} \nabla h_{j} = \nabla \cdot f_{j}$, where $f_{j}:= ( a^{*}\phi_{ji_{1} \dots i_{\tilde{d}-1}}^{*,\tilde{d}} - \sigma_{ji_{1} \dots i_{\tilde{d}-1}}^{*,\tilde{d}}  ) \nabla \partial^{\tilde{d}-1}_{i_{1} \dots i_{\tilde{d}-1}} g \ $. Note that $\ \supp f_{j} \subset B_{R}$, in particular $h_{j}$ is $a^{*}-$harmonic in $B_{R}^{c}$ which contains $B_{L/2}^{c}$ (recall that $L>>R$). For any $|x| > \frac{L}{2}$, we bound first $\fint_{B_{\frac{L}{4}}(x)} |\nabla h_{j}|^{2}$ using the Lipschitz estimate (or mean-value property) of Theorem 1 in \cite{GNO20}. To achieve this we consider the auxiliary function $v$ which solves
\begin{equs}
-\nabla \cdot a \nabla v_{j} = \nabla \cdot \tilde{f}_{j}, \ \ \text{ where } \ \ \tilde{f}_{j}:=\mathcal{X}_{B^{c}_{L/4}}\nabla h_{j}.
\end{equs}
We then have
\begin{equs}
\int_{B_{\frac{L}{4}}^{c}} |\nabla h_{j}|^{2} = \int \tilde{f}_{j} \cdot \nabla h_{j} = \int a \nabla v_{j}\cdot \nabla h_{j} &= \int  \nabla v_{j} \cdot  f_{j} \\
&\lesssim \left( \int_{B_{R}} |\nabla v_{j}|^{2} \right)^{1/2} \left( \int_{B_{R}} |f_{j}|^{2} \right)^{1/2}.
\end{equs}
Denoting by $R^{*}_{x}= \max \{r^{*}(x), R\} $, where $r^{*}$ the minimal random radius of Theorem 1 in \cite{GNO20}, we estimate
\begin{equs}
\left( \int_{B_{R}} |\nabla v_{j}|^{2} \right)^{1/2} &\lesssim (R^{*}_{0})^{d/2} \left( \fint_{B_{R^{*}_{0}}} |\nabla v_{j}|^{2} \right)^{1/2}\\
&\lesssim (R^{*}_{0})^{d/2} \left( \fint_{B_{L/2}} |\nabla v_{j}|^{2} \right)^{1/2} \ \  \ \  (v \text{ is } a \text{-harmonic in } B_{L/2}) \\
&\lesssim \frac{(R^{*}_{0})^{d/2}}{L^{d/2}} \left( \int |\tilde{f}_{j}|^{2} \right)^{1/2}.  \ \ \ \ \ \ \ \ \ \  (\text{by the energy estimate})
\end{equs}
Thus we get
\begin{equs}
\int_{B_{\frac{L}{4}}^{c}} |\nabla h_{j}|^{2} \lesssim \frac{(R^{*}_{0})^{d/2}}{L^{d/2}} \left( \int_{B_{\frac{L}{4}}^{c}} |\nabla h_{j}|^{2} \right)^{1/2}\left( \int_{B_{R}} |f_{j}|^{2} \right)^{1/2}
\end{equs}
that is,
\begin{equs}
\left( \fint_{B_{L/4}(x)} |\nabla h_{j}|^{2} \right)^{1/2} \lesssim \frac{1}{L^{d/2}} \left( \int_{B_{L/4}^{c}} |\nabla h_{j}|^{2} \right)^{1/2} \lesssim \frac{(R^{*}_{0})^{d/2}}{L^{d}} \left( \int |f_{j}|^{2} \right)^{1/2}.
\end{equs}
Next we estimate $\Big \langle \big |  \nabla h_{j}(x)  \big |^{4} \Big \rangle^{1/4}$ for every $|x| > \frac{L}{2}$. By small-scale regularity we have, for $|x| > \frac{L}{2}$,
\begin{equs}
|\nabla h_{j} (x)| \lesssim C(a) \left( \fint_{B_{R/4}(x)} |\nabla h_{j}|^{2} \right)^{1/2},
\end{equs}
where $C(a)$ denotes the H\"older constant of the coefficient field $a$ (note that for $L>4R$, $B_{R/4}(x) \subset B_{L/4}^{c}$ and $h_{j}$ is $a^{*}$-harmonic in $B_{L/4}^{c}$). Then we apply Lipschitz estimate once more to get (assuming that $L>4R$)
\begin{equs}
|\nabla h_{j} (x)| \lesssim C(a) \frac{(R^{*}_{x})^{d/2}}{R^{d/2}} \left( \fint_{B_{R^{*}_{x}}(x)} |\nabla h_{j}|^{2} \right)^{1/2} &\lesssim C(a) \frac{(R^{*}_{x})^{d/2}}{R^{d/2}} \left( \fint_{B_{L/4}(x)} |\nabla h_{j}|^{2} \right)^{1/2} \\
&\lesssim C(a) \frac{(R^{*}_{x})^{d/2}}{R^{d/2}} \frac{(R^{*}_{0})^{d/2}}{L^{d}} \left( \int |f_{j}|^{2} \right)^{1/2}.
\end{equs}
Then we take expectation, we use the fact that both $C(a)$ (see (\ref{a-holder})) and $r^{*}(x)$ have uniformly bounded stochastic moments and choosing the worst scenario for the factor we get
\begin{equs}
\Big \langle \big |  \nabla h_{j}(x)  \big |^{4} \Big \rangle ^{1/4} \lesssim \frac{R^{d/2}}{L^{d}} \bigg \langle  \left( \int |f_{j}|^{2} \right)^{8} \bigg \rangle ^{1/16}. 
\end{equs}
It remains to estimate the r.h.s using Minkowski's integral inequality and Proposition \ref{momentbounds} (recall that $f_{j}:= ( a^{*}\phi_{ji_{1} \dots i_{\tilde{d}-1}}^{*,\tilde{d}} - \sigma_{ji_{1} \dots i_{\tilde{d}-1}}^{*,\tilde{d}}  ) \nabla \partial^{n-1}_{i_{1} \dots i_{n-1}} g \ $). We have
\begin{equs}
\Big \langle \big |  \nabla h_{j}(x)  \big |^{4} \Big \rangle ^{1/4} &\lesssim \frac{R^{d/2}}{L^{d}}  \left( \int \langle |f_{j}|^{16}\rangle^{1/8} \right)^{1/2} \\
&\lesssim \frac{R^{d/2}}{L^{d}} \left( \int_{B_{R}} |D^{\tilde{d}} g|^{2} \langle |a^{*}\phi_{ji_{1} \dots i_{\tilde{d}-1}}^{*,\tilde{d}} - \sigma_{ji_{1} \dots i_{\tilde{d}-1}}^{*,\tilde{d}} |^{16}\rangle^{1/8} \right)^{1/2} \\
&\lesssim \frac{R^{d/2}}{L^{d}}  \left( \int_{B_{R}} |D^{\tilde{d}} g|^{2} |z| \right)^{1/2} \lesssim \frac{R^{d/2}} {L^{d}} R^{-\frac{d}{2}-\tilde{d}}R^{\frac{1}{2}} = R^{-d/2}L^{-d}.
\end{equs}

 \end{proof}
 
 \vspace{.5em}
 
\begin{remark} \label{bounds-hj+}
 Note that by translation we easily see that the solution $h_{l}'$ of 
\begin{equs}
  - \nabla \cdot a^{*} \nabla h_{l}' = \nabla \cdot \left( ( a^{*}\phi_{li_{1} \dots i_{\tilde{d}-1}}^{*,\tilde{d}} - \sigma_{li_{1} \dots i_{\tilde{d}-1}}^{*,\tilde{d}}  ) \nabla \partial^{\tilde{d}-1}_{i_{1} \dots i_{\tilde{d}-1}} g' \right)
 \end{equs}
 satisfies, for any $|x-Le| > \frac{L}{2}$,
 \begin{equs} \label{bound-hl}
 \Big \langle \big |  \nabla h'_{l}(x)  \big |^{4} \Big \rangle ^{1/4} \lesssim R^{-d/2}L^{-d}. 
 \end{equs}
Note also that the above bounds can be rephrased (assuming that $L>4R$) as
\begin{equs}
{}&\Big \langle \big |  \nabla h_{j}(x)  \big |^{4} \Big \rangle ^{1/4} \lesssim R^{-d/2}|x|^{-d}, \ \text{ for any } \ |x| > 2R \label{bound-hj2} \\
& \Big \langle \big |  \nabla h'_{l}(x)  \big |^{4} \Big \rangle ^{1/4} \lesssim R^{-d/2}|x-Le|^{-d}, \ \text{ for any }   \ |x-Le| > 2R. \label{bound-hl2}
\end{equs}
A last observation that will be useful in the sequel is the following 
\begin{equs} \label{CZ}
\left( \int   \Big \langle \big |  \nabla h_{j}  \big |^{4} \Big \rangle^{2/4} \right)^{1/2} \lesssim R^{-d}
\end{equs}
which is a consequence of annealed Calderon-Zygmund estimates (see Proposition 7.1 in \cite{JO20}) and Proposition \ref{momentbounds}
\begin{equs} 
\left( \int   \Big \langle \big |  \nabla h_{j}  \big |^{4} \Big \rangle^{2/4} \right)^{1/2} \lesssim \left( \int   \Big \langle \big |   f_{j}  \big |^{4} \Big \rangle^{2/4} \right)^{1/2} \lesssim R^{1/2} \left( \int    \big |  D^{\tilde{d}}g \big |^{2} \right)^{1/2} \lesssim R^{1/2} R^{-\frac{d}{2}-\tilde{d}} = R^{-d}.
\end{equs}
A similar bound holds for $h_{l}'$ as well.
 \end{remark}
 
 \vspace{.5em}
 
We are now ready to prove our main theorem.

 \begin{proof}[Proof of Theorem \ref{mainthm}]
 Combining estimate (\ref{cov_est}) with Proposition \ref{repr_form_prop} we get
 \begin{equs} 
P_{ijml}^{o,\tilde{d}} &\lesssim \int \ \left[ \Big \langle \big |  (\nabla \phi _{i}^{1}(x)+e_{i}) \otimes g(x)\ (\nabla \phi_{j}^{*,1}(x)+e_{j})   \big |^{2} \Big \rangle^{1/2}\right.\\
&\ \ \ \ \ \ \ \ \ + \Big \langle \Big |  (\nabla \phi _{i}^{1}(x)+e_{i}) \otimes \sum_{k=1}^{\tilde{d}-1} \partial^{k}_{i_{1} \dots i_{k}} g(x) (\nabla \phi_{ji_{1} \dots i_{k}}^{*,k+1} (x)+e_{i_{k}} \phi_{ji_{1} \dots i_{k-1}}^{*,k}(x))   \Big |^{2} \Big \rangle^{1/2}  \\
&\ \ \ \ \ \ \ \ \ + \Big \langle \big |  (\nabla \phi _{i}^{1}(x)+e_{i}) \otimes \partial^{\tilde{d}}_{i_{1} \dots i_{\tilde{d}}} g(x) \ e_{i_{\tilde{d}}}\phi_{ji_{1} \dots i_{\tilde{d}-1}}^{*,\tilde{d}}(x)  \big |^{2} \Big \rangle^{1/2} \label{mainthm1} \\
&\ \ \ \ \ \ \ \ \ +\left. \Big \langle \big |  (\nabla \phi _{i}^{1}(x)+e_{i}) \otimes \nabla h_{j} (x) \big |^{2} \Big \rangle^{1/2}  \right] \\
&\ \  \ \ \ \times \int |c(x-y)| \bigg [  \text{ same terms with } i \leftrightarrow m, j \leftrightarrow l,x \leftrightarrow  y  \text{ and } g \leftrightarrow g' \  \bigg ] \dy \dx.
 \end{equs}

Using Proposition \ref{momentbounds} we may estimate
\begin{equs}
&\bullet \ \  \Big \langle \Big |  (\nabla \phi _{i}^{1}+e_{i}) \otimes g (\nabla \phi_{j}^{*,1}+e_{j})   \Big |^{2} \Big \rangle^{1/2} \lesssim \ |g| \Big \langle \big |  \nabla \phi _{i}^{1}+e_{i} \big |^{4} \Big \rangle^{1/4} \Big \langle \big |  \nabla \phi_{j}^{*,1}+e_{j})   \big |^{4} \Big \rangle^{1/4} \lesssim \ |g|. \\
&\bullet \ \ \Big \langle \Big |  (\nabla \phi _{i}^{1}+e_{i}) \otimes \sum_{k=1}^{\tilde{d}-1} \partial^{k}_{i_{1} \dots i_{k}} g (\nabla \phi_{ji_{1} \dots i_{k}}^{*,k+1} +e_{i_{k}} \phi_{ji_{1} \dots i_{k-1}}^{*,k})   \Big |^{2} \Big \rangle^{1/2} \\
& \ \ \ \ \ \ \ \ \ \lesssim \Big \langle \big |  \nabla \phi _{i}^{1}+e_{i} \big |^{4} \Big \rangle^{1/4} \sum_{k=1} ^{\tilde{d}-1} \big | \partial^{k}_{i_{1} \dots i_{k}} g \big | \left(  \Big \langle \big | \nabla \phi_{ji_{1} \dots i_{k}}^{*,k+1}  \big |^{4} \Big \rangle^{1/4} +  \Big \langle \big | \phi_{ji_{1} \dots i_{k-1}}^{*,k}  \big |^{4} \Big \rangle^{1/4} \right) \\
& \ \ \ \ \ \ \ \ \ \lesssim \sum_{k=1} ^{\tilde{d}-1} \big | \partial^{k}_{i_{1} \dots i_{k}} g \big |. \ \ \ \ \ \ \ \ (\text { note that } k<\tilde{d}\ ) \\
&\bullet \ \ \Big \langle \big |  (\nabla \phi _{i}^{1}+e_{i}) \otimes \partial^{\tilde{d}}_{i_{1} \dots i_{\tilde{d}}} g \ \phi_{ji_{1} \dots i_{\tilde{d}-1}}^{*,\tilde{d}}  \big |^{2} \Big \rangle^{1/2} \\
& \ \ \ \ \ \ \ \ \ \lesssim \ \big |  \partial^{\tilde{d}}_{i_{1} \dots i_{\tilde{d}}} g \big | \Big \langle \big |  \nabla \phi _{i}^{1}+e_{i}  \big |^{4} \Big \rangle^{1/4} \Big \langle \big | \phi_{ji_{1} \dots i_{\tilde{d}-1}}^{*,\tilde{d}}  \big |^{4} \Big \rangle^{1/4} \lesssim \ |z|^{1/2} \ \big |  \partial^{\tilde{d}}_{i_{1} \dots i_{\tilde{d}}} g \big |. \\
&\bullet \ \ \Big \langle \big |  (\nabla \phi _{i}^{1}+e_{i}) \otimes \nabla h_{j}  \big |^{2} \Big \rangle^{1/2} \lesssim \Big \langle \big |  \nabla \phi _{i}^{1}+e_{i}  \big |^{4} \Big \rangle^{1/4} \Big \langle \big |  \nabla h_{j}  \big |^{4} \Big \rangle^{1/4} \lesssim \Big \langle \big |  \nabla h_{j}  \big |^{4} \Big \rangle^{1/4}.
\end{equs}

Now we return to (\ref{mainthm1}), we apply the above estimates and multiply to get
\begin{equs}
\ \ P_{ijml}^{o,\tilde{d}} &\lesssim  \int \bigg ( \sum_{k=0} ^{\tilde{d}-1} \big | \partial^{k}_{i_{1} \dots i_{k}} g (x)\big | + |x|^{\frac{1}{2}} \ \big |  \partial^{\tilde{d}}_{i_{1} \dots i_{\tilde{d}}} g(x) \big | \bigg ) \\
&\quad \quad \times \int |c(x-y)| \bigg ( \sum_{k=0} ^{\tilde{d}-1} \big | \partial^{k}_{i_{1} \dots i_{k}} g'(y) \big | + |y-Le|^{\frac{1}{2}} \ \big |  \partial^{\tilde{d}}_{i_{1} \dots i_{\tilde{d}}} g'(y) \big | \bigg ) \dy \dx \\
&\  + \int \Big \langle \big |  \nabla h_{j}(x)  \big |^{4} \Big \rangle^{\frac{1}{4}} \int |c(x-y)| \bigg ( \sum_{k=0} ^{\tilde{d}-1} \big | \partial^{k}_{i_{1} \dots i_{k}} g'(y) \big | + |y-Le|^{\frac{1}{2}} \ \big |  \partial^{\tilde{d}}_{i_{1} \dots i_{\tilde{d}}} g'(y) \big | \bigg )   \dy \dx\\
&\ + \int  \bigg ( \sum_{k=0} ^{\tilde{d}-1} \big | \partial^{k}_{i_{1} \dots i_{k}} g(x) \big | + |x|^{\frac{1}{2}} \ \big |  \partial^{\tilde{d}}_{i_{1} \dots i_{\tilde{d}}} g(x) \big | \bigg ) \int |c(x-y)|  \Big \langle \big |  \nabla h'_{l}(y)  \big |^{4} \Big \rangle^{\frac{1}{4}} \dy \dx\\
&\ + \int \Big \langle \big |  \nabla h_{j}(x)  \big |^{4} \Big \rangle^{\frac{1}{4}}  \int |c(x-y)| \Big \langle \big |  \nabla h'_{l}(y)  \big |^{4} \Big \rangle^{\frac{1}{4}} \dy \dx.
\end{equs}

Next we focus on bounding each of these four terms. Starting with the first one we observe that it is enough to estimate the term
\begin{equs}
\int  |  g (x) |  \int |c(x-y)| \ | g'(y)  | \dy \dx \lesssim L^{-d-\alpha_{0}} \lesssim R^{-d/2}L^{-d/2-\alpha_{0}}
\end{equs}
where we used that $x \in B_{R}$ and $y \in B_{R}(Le)$ because of the supports of $g$ and $g'$ respectively. Then $|x-y| \geq L/2$ which gives the estimate if we use the integrable decay (\ref{c-decay}) we have assumed for $c$.

We proceed with the second term where we see once again that it is enough to estimate the ''subterm'' with the ''worst'' behaviour, that is
\begin{equs}
\int \Big \langle \big |  \nabla h_{j}(x)  \big |^{4} \Big \rangle^{\frac{1}{4}} \int |c(x-y)| |g'(y) |  \dy \dx.
\end{equs}
For, we split the domain of integration into two in order to be able to apply the estimate of Lemma \ref{bounds-hj} to $h_{j}$. So for the first term we apply bound (\ref{bound-hj}) and Young's convolution inequality, while for the second term we use estimate (\ref{CZ}) and the integrable decay (\ref{c-decay}) of $c$ together with Minkowski's integral inequality,
\begin{equs} 
{} &\int \Big \langle \big |  \nabla h_{j}(x)  \big |^{4} \Big \rangle^{\frac{1}{4}} \int |c(x-y)| |g'(y) |  \dy \dx \\
&\quad \lesssim R^{-d/2}L^{-d} \int_{B^{c}_{L/2}}  \int |c(x-y)| |g'(y) |  \dy \dx \\
&\quad \ \ +  \int_{B_{L/2}}  \int  \frac{|g'(y) |}{(1+|x-y|)^{d+\alpha_{0}}}  \Big \langle \big |  \nabla h_{j}(x)  \big |^{4} \Big \rangle^{\frac{1}{4}} \dy \dx \\
&\quad \lesssim R^{-d/2}L^{-d} ||g'||_{L^{1}}||c||_{L^{1}} \\
&\quad \ \ + \left( \int_{B_{L/2}} \left(  \int_{B_{R}(Le)}  \frac{|g'(y) |}{(1+|x-y|)^{d+\alpha_{0}}} \right)^{2} \dy \dx \right)^{1/2} \left( \int  \Big \langle \big |  \nabla h_{j}(x)  \big |^{4} \Big \rangle^{\frac{2}{4}} \dx \right)^{1/2} \\
&\quad \lesssim R^{-d/2}L^{-d}  + R^{-d} \int_{B_{R}(Le)} |g'(y) |   \left( \int_{B_{L/2}}  \frac{1}{(1+|x-y|)^{2d+2\alpha_{0}}} \dx \right)^{1/2} \dy \\
&\quad \lesssim R^{-d/2}L^{-d}  + R^{-d} L^{d/2} L^{-d-\alpha_{0}} \lesssim R^{-d/2}L^{-d/2 -\alpha_{0}}.
\end{equs}
It remains to bound the fourth term which is the most challenging. In order to be able to use the bounds of Lemma \ref{bounds-hj} and Remark \ref{bounds-hj+} we divide the domains of integration as follows
\begin{equs}
{}&\int \Big \langle \big |  \nabla h_{j}(x)  \big |^{4} \Big \rangle^{\frac{1}{4}}  \int |c(x-y)| \Big \langle \big |  \nabla h'_{l}(y)  \big |^{4} \Big \rangle^{\frac{1}{4}} \dy \dx \\
&\quad \lesssim R^{-d/2}L^{-d} \int_{B_{3R}(Le)} \left( \int |c(x-y)|^{2} \dy \right)^{1/2} \left( \int \Big \langle \big |  \nabla h'_{l}(y)  \big |^{4} \Big \rangle^{\frac{2}{4}} \dy \right)^{1/2}\dx \\
&\quad \ \ + R^{-d/2}L^{-d} \left( \int_{B^{c}_{3R}(Le)} \Big \langle \big |  \nabla h_{j}(x)  \big |^{4} \Big \rangle^{\frac{2}{4}} \dx \right)^{1/2} \left( \int_{B^{c}_{3R}(Le)}  \left( \int_{B_{3R}} |c(x-y)| dy \right)^{2} \dx \right)^{1/2} \\
&\quad \ \ + \int_{B^{c}_{3R}(Le)} \Big \langle \big |  \nabla h_{j}(x)  \big |^{4} \Big \rangle^{\frac{1}{4}}  \int_{B^{c}_{3R}} |c(x-y)| \Big \langle \big |  \nabla h'_{l}(y)  \big |^{4} \Big \rangle^{\frac{1}{4}} \dy \dx \\
&\quad \lesssim R^{-d/2}L^{-d} + \int_{B^{c}_{3R}(Le)} \Big \langle \big |  \nabla h_{j}(x)  \big |^{4} \Big \rangle^{\frac{1}{4}}  \int_{B^{c}_{3R}} |c(x-y)| \Big \langle \big |  \nabla h'_{l}(y)  \big |^{4} \Big \rangle^{\frac{1}{4}} \dy \dx
\end{equs}
using estimates (\ref{bound-hj}), (\ref{bound-hl}) and (\ref{CZ}). Now for the last term we need to further divide the domains. Precisely, we split the $x$-integral into $B_{3R}$ and $B^{c}_{3R}$ and the $y$-integral into $B_{3R}(Le)$ and $B^{c}_{3R}(Le)$. This produces four new terms that we estimate in the following. For the first one we may apply estimates (\ref{bound-hj2}) and (\ref{bound-hl2}). Then we divide the domain of $y$-integral once again and use the integrable decay (\ref{c-decay}) of $c$
\begin{equs}
 {}&\int_{B^{c}_{3R}(Le) \cap B^{c}_{3R}} \Big \langle \big |  \nabla h_{j}(x)  \big |^{4} \Big \rangle^{\frac{1}{4}}  \int_{B^{c}_{3R}(Le) \cap B^{c}_{3R}  } |c(x-y)| \Big \langle \big |  \nabla h'_{l}(y)  \big |^{4} \Big \rangle^{\frac{1}{4}} \dy \dx \\
 &\quad  \lesssim R^{-d} \int_{B^{c}_{3R}(Le) \cap B^{c}_{3R}}  \int_{B^{c}_{3R}(Le) \cap B^{c}_{3R}  } |x|^{-d} |y-Le|^{-d}|c(x-y)| \dy \dx \\
 &\quad  \lesssim R^{-d} \int_{B^{c}_{3R}(Le) \cap B^{c}_{3R}} |x|^{-d}   \int_{B^{c}_{3R}(Le) \cap B^{c}_{3R}  \cap \{|y-Le| > \frac{|x-Le|}{2}\}} |y-Le|^{-d}|c(x-y)|  \dy \dx \\
 &\quad \ \ + R^{-d} \int_{B^{c}_{3R}(Le) \cap B^{c}_{3R}} |x|^{-d}   \int_{B^{c}_{3R}(Le) \cap B^{c}_{3R}  \cap \{|y-Le| \leq \frac{|x-Le|}{2}\}}\frac{|y-Le|^{-d}}{(1+|x-y|)^{d+\alpha_{0}}} \dy \dx \\
&\quad  \lesssim R^{-d} \int_{B^{c}_{3R}(Le) \cap B^{c}_{3R}} |x|^{-d}  |x-Le|^{-d}  ||c||_{L^{1}} \dx \\
&\quad \ \ + R^{-d} \int_{B^{c}_{3R}(Le) \cap B^{c}_{3R}} |x|^{-d}   \int_{\{ 3R \leq |y-Le| \leq \frac{|x-Le|}{2}\}}\frac{|y-Le|^{-d}}{|x-Le|^{d+\alpha_{0}}} \dy \dx \\
&\quad \lesssim R^{-d} \int_{B^{c}_{3R}(Le) \cap B^{c}_{3R}} |x|^{-d}  |x-Le|^{-d}  \dx \\
&\quad \ \ + R^{-d} \int_{B^{c}_{3R}(Le) \cap B^{c}_{3R}} |x|^{-d}  \ln \frac{|x-Le|}{6R} |x-Le|^{-d-\alpha_{0}} \dx \\
&\quad  \lesssim R^{-d} \int_{B^{c}_{3R}(Le) \cap B^{c}_{3R}} |x|^{-d}  |x-Le|^{-d}  \dx + R^{-d} \int_{B^{c}_{3R}(Le) \cap B^{c}_{3R}} |x|^{-d}  |x-Le|^{-d-\alpha_{0}+1}  \dx.
\end{equs}
We then calculate
\begin{equs}
{}& R^{-d} \int_{B^{c}_{3R}(Le) \cap B^{c}_{3R}} |x|^{-d}  |x-Le|^{-d}  \dx\\
&\quad =2 R^{-d}\int_{\{|x|<|x-Le| \} \cap B_{3R}^{c}} |x|^{-d}    |x-Le|^{-d} \dx \\
&\quad  \lesssim R^{-d} \int_{\{|x|<|x-Le| \} \cap B_{3L}^{c}} |x|^{-2d}   \dx + R^{-d}L^{-d} \int_{\{|x|<|x-Le| \} \cap (B_{3L} \setminus B_{3R})} |x|^{-d}   \dx \\
& \quad \lesssim R^{-d} L^{-d}\ln \frac{L}{R} \lesssim R^{-d/2} L^{-d/2-\alpha_{0}}\ln \frac{L}{R}.
\end{equs}
Similarly $ \int_{B^{c}_{3R}(Le) \cap B^{c}_{3R}} |x|^{-d}  |x-Le|^{-d-\alpha_{0}+1}  \dx \lesssim R^{-d} L^{-d-\alpha_{0}+1}\ln \frac{L}{R}$ (note that in that term we could get rid of the logarithmic correction when $d \geq 4$).

For the next term we split the $y$-integral into $|x-y| > L/4$ and $|x-y| \leq L/4$. So in the first case we use the integrable decay (\ref{c-decay}) of $c$ to gain the power we need on $L$ together with estimate (\ref{CZ}). In the second case we see that $|x| \geq L/2$ which allows to apply Lemma \ref{bounds-hj} which we then combine with (\ref{CZ}). Indeed,
\begin{equs}
{}&\int_{B^{c}_{3R}(Le) \cap B^{c}_{3R}} \Big \langle \big |  \nabla h_{j}(x)  \big |^{4} \Big \rangle^{\frac{1}{4}}  \int_{B_{3R}(Le) \cap B^{c}_{3R}  } |c(x-y)| \Big \langle \big |  \nabla h'_{l}(y)  \big |^{4} \Big \rangle^{\frac{1}{4}} \dy \dx \\ 
&\quad  \lesssim \left( \int \Big \langle \big |  \nabla h_{j}(x)  \big |^{4} \Big \rangle^{\frac{2}{4}} \dx \right)^{1/2} \\
&\quad \quad \quad \quad \times \left( \int_{B^{c}_{3R}(Le) \cap B^{c}_{3R}} \left(  \int_{B_{3R}(Le) \cap \{|x-y| > L/4 \}  } |c(x-y)| \Big \langle \big |  \nabla h'_{l}(y)  \big |^{4} \Big \rangle^{\frac{1}{4}} dy\right)^{2} \dx \right)^{1/2} \\
&\quad \ \ + \int_{B^{c}_{3R}(Le) \cap B^{c}_{3R}} \Big \langle \big |  \nabla h_{j}(x)  \big |^{4} \Big \rangle^{\frac{1}{4}}  \int_{B_{3R}(Le) \cap \{|x-y| \leq L/4\}  } |c(x-y)| \Big \langle \big |  \nabla h'_{l}(y)  \big |^{4} \Big \rangle^{\frac{1}{4}} \dy \dx \\
&\quad  \lesssim R^{-d} \int_{B_{3R}(Le)  } \Big \langle \big |  \nabla h'_{l}(y)  \big |^{4} \Big \rangle^{\frac{1}{4}} \left(  \int_{B^{c}_{3R}(Le) \cap B^{c}_{3R} \cap \{|x-y| > L/4\}  } |c(x-y)|^{2} \dx \right)^{1/2} \dy \\
&\quad \ \ + R^{-d/2}L^{-d}  \int_{B^{c}_{3R}(Le) \cap B^{c}_{3R}}  \int_{B_{3R}(Le) }  |c(x-y)| \Big \langle \big |  \nabla h'_{l}(y)  \big |^{4} \Big \rangle^{\frac{1}{4}}  \dy \dx  \\
&\quad \lesssim R^{-d} R^{d/2}R^{-d} L^{-d/2 -\alpha_{0}} +R^{-d/2}L^{-d} R^{d/2}R^{-d} ||c||_{L^{1}} \lesssim R^{-d/2}L^{-d/2-\alpha_{0}}.
\end{equs}
Note that the term
\begin{equs}
 \int_{B^{c}_{3R}(Le) \cap B_{3R}} \Big \langle \big |  \nabla h_{j}(x)  \big |^{4} \Big \rangle^{\frac{1}{4}}  \int_{B^{c}_{3R}(Le) \cap B^{c}_{3R}  } |c(x-y)| \Big \langle \big |  \nabla h'_{l}(y)  \big |^{4} \Big \rangle^{\frac{1}{4}} \dy \dx 
\end{equs}
can be treated analogously. Finally, it remains to bound
\begin{equs}
 {}&\int_{B^{c}_{3R}(Le) \cap B_{3R}} \Big \langle \big |  \nabla h_{j}(x)  \big |^{4} \Big \rangle^{\frac{1}{4}}  \int_{B_{3R}(Le) \cap B^{c}_{3R}  } |c(x-y)| \Big \langle \big |  \nabla h'_{l}(y)  \big |^{4} \Big \rangle^{\frac{1}{4}} \dy \dx \\
 &\quad  \lesssim \left( \int \Big \langle \big |  \nabla h_{j}(x)  \big |^{4} \Big \rangle^{\frac{2}{4}} \dx \right)^{1/2} \left( \int_{B_{3R}} \left( \int_{B_{3R(Le)}} |c(x-y)| \Big \langle \big |  \nabla h'_{l}(y)  \big |^{4} \Big \rangle^{\frac{1}{4}} dy\right)^{2} \dx  \right)^{1/2} \\
 &\quad  \lesssim R^{-d} R^{d/2} L^{-d-\alpha_{0}} R^{d/2}  \left( \int \Big \langle \big |  \nabla h_{l}'(y)  \big |^{4} \Big \rangle^{\frac{2}{4}} dy \right)^{1/2} \lesssim R^{-d/2}L^{-d/2 -\alpha_{0}}
\end{equs}
where we used estimate (\ref{CZ}), the integrable decay (\ref{c-decay}) of $c$ together with the fact that $|x-y| \gtrsim L$ when $x \in B_{3R}$ and $y \in B_{3R}(Le)$.
 \end{proof}

 \vspace{2em}

\section*{Acknowledgements}
I would like to express my special thanks to Felix Otto for pointing out this open question, as well as for the fruitful discussions we had on the topic.
 
 \vspace{2em}

\bibliographystyle{plain}   
\bibliography{biblio}             
\index{Bibliography@\emph{Bibliography}}%
%
%

%

\end{document}